\theoremstyle{plain}
\newtheorem{thm}{Theorem}[section]
\newtheorem{cor}[thm]{Corollary}
\newtheorem{lem}[thm]{Lemma}
\newtheorem{rem}[thm]{Remark}
\newtheorem{exam}[thm]{Example}
\def\cal{\mathcal}
\def\bbb{\mathbb}
\def\op{\operatorname}
\renewcommand{\phi}{\varphi}
\newcommand{\R}{\bbb{R}}
\newcommand{\N}{\bbb{N}}
\newcommand{\Z}{\bbb{Z}}
\newcommand{\Q}{\bbb{Q}}
\newcommand{\al}{\alpha}
\newcommand{\be}{\beta}
\newcommand{\ga}{\gamma}
\newcommand{\Ga}{\Gamma}
\begin{document}
\title[Diagonal equations]{On certain diophantine equations of diagonal type}
%\author{}
\author{Andrew Bremner and Maciej Ulas}
%\thanks{School of Mathematics and Statistics, Arizona State University, Tempe AZ 85287-1804, USA; e-mail: bremner@asu.edu}
%\and Maciej Ulas
%\thanks{Jagiellonian University, Faculty of Mathematics and Computer Science, Institute of Mathematics, {\L}ojasiewicza 6, 30-348 Krak\'ow, Poland. email Maciej.Ulas@im.uj.edu.pl}
\keywords{rational points, diagonal equations, Zariski density} \subjclass[2010]{11D57, 11D85}

\begin{abstract}
In this note we consider Diophantine equations of the form
\begin{equation*}
a(x^p-y^q) = b(z^r-w^s), \quad
\mbox{where}\quad \frac{1}{p}+\frac{1}{q}+\frac{1}{r}+\frac{1}{s}=1,
\end{equation*}
with even positive integers $p,q,r,s$. We show that in each case the set of rational
points on the underlying surface is dense in the Zariski topology. For the surface with
$(p,q,r,s)=(2,6,6,6)$ we prove density of rational points in the Euclidean topology.
Moreover, in this case we construct infinitely many parametric solutions in coprime
polynomials. The same result is true for  $(p,q,r,s)\in\{(2,4,8,8), (2,8,4,8)\}$.
In the case $(p,q,r,s)=(4,4,4,4)$, we present some new parametric solutions of the
equation $x^4-y^4=4(z^4-w^4)$.
\end{abstract}

\maketitle

\section{Introduction}\label{sec1}

It is well known that the Diophantine equation $x^4+y^4=z^4+w^4$ has
infinitely many integer solutions (Euler). In fact, as was proved by
Swinnerton-Dyer, this diophantine equation has infinitely many
rational parametric solutions \cite{SwD}. This implies that the set
of rational points on the surface defined by this equation is dense
in all real points. The same is true for the equation
$x^4+y^4+z^4=w^4$ as was proved by Elkies in \cite{Elk}. It is also
classically known that the diophantine equation $x^6+y^6-z^6=w^2$ has infinitely
many integer solutions. According to Dickson \cite{Dic} this result was obtained
by Rignaux by construction of two polynomial
solutions. In a recent paper we extended these
investigations to equations
\begin{equation}\label{pqrsequation}
x^p \pm y^q = \pm z^r \pm w^s, \quad p,q,r,s \in \N_{+},\quad
\mbox{where}\quad \frac{1}{p}+\frac{1}{q}+\frac{1}{r}+\frac{1}{s}=1.
\end{equation}
We call the above equation an {\it equation of diagonal type}.  In \cite{BreUl} we
considered forty one equations (those not considered by Euler, Rignaux and Elkies)
corresponding to configurations of signs and exponents. We showed that for all but
one instance the existence of real solutions implies the existence of infinitely many
rational solutions.
% In ten cases we were able to find an elliptic curve with infinitely many rational points
% lying on an appropriate surface.
The remaining equation which we were unable to treat is $w^2=-x^6-y^6+z^6$.  In the range
$x+y+z<5000$ we know only one numerical solution $(x,y,z,w)=(28,44,57,162967)$ and Noam
Elkies~\cite{Elk2} has informed us that this is the only solution with $x, y\leq 2^{15}$.

In this paper we study the natural generalization of the equation (\ref{pqrsequation})
in the form
\begin{equation}\label{genequation}
a(x^p-y^q) = b(z^r-w^s), \quad p,q,r,s \in \N_{+},\quad
\mbox{where}\quad \frac{1}{p}+\frac{1}{q}+\frac{1}{r}+\frac{1}{s}=1.
\end{equation}
Here $a, b$ are fixed nonzero integer numbers. It is clear that we
can assume that $\gcd(a,b)=1$. The aim of this paper is to show that
for each of the quadruplets $(p,q,r,s)$ with $p,q,r,s$ even, the
diophantine equation (\ref{genequation}) has infinitely many
rational parametric solutions.
Of course, there are trivial parametric solutions of (\ref{genequation}) corresponding to $x^p=y^q$, $z^r=w^s$,
etc., representing lines and curves on the surface in weighted projective space; but these
solutions are not of interest to us, and henceforth we discard such possibilities.

In Section \ref{sec2} we generalize
Rignaux's result by proving that when $(p,q,r,s)=(2,6,6,6)$, then there
are infinitely many polynomial solutions to (\ref{genequation}) in coprime polynomials $x,y,z,w$.
A similar result is proved in Section \ref{sec3} for $(p,q,r,s)=(2,4,8,8)$, $(2,8,4,8)$.
In Section \ref{sec4} we investigate the three equations (\ref{genequation}) that arise from
permutations of $(p,q,r,s)=(2,4,6,12)$. We show there are infinitely many coprime
polynomial solutions of (\ref{genequation}) for $(p,q,r,s)=(2,4,6,12)$, and show there
are infinitely many polynomial solutions, not necessarily coprime, for
$(p,q,r,s)=(2,6,4,12)$, $(2,12,4,6)$.  The  basic idea in these three sections is to use
an elliptic fibration on the corresponding surface in weighted projective space. In
Section \ref{sec5} we consider the case $(p,q,r,s)=(4,4,4,4)$, which needs different
techniques from elementary arithmetic algebraic geometry. It devolves into cases where
$a/b$ is not a square; $a/b$ is a square mod fourth powers, not 1 or 4; and $a/b$
mod fourth powers equals 1 or 4. In the last two sections we consider some generalizations
of the preceding results.

\section{The equation $a(x^2-y^6)=b(z^6-w^6)$}\label{sec2}

In this section we construct polynomial solutions of the equation
\begin{equation}\label{eq2666}
a(x^2-y^6)=b(z^6-w^6),
\end{equation}
where $a,b$ are coprime integers. The following theorem generalizes the result of
Rignaux~\cite{Rig}.
\begin{thm}\label{thm1}
Fix $a,b\in\Z\setminus\{0\}$. The diophantine equation
$a(x^2-y^6)=b(z^6-w^6)$ has infinitely many solutions in coprime polynomials $x,y,z,w \in \Z[t]$.
Moreover, the set of rational points on the surface $S:\;a(x^2-y^6)=b(z^6-w^6)$ is dense
in the Euclidean topology.
\end{thm}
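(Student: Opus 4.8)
The plan is to produce polynomial solutions by fixing a convenient ansatz that reduces the bivariate (in the sense of weighted projective) problem to a single polynomial identity in $t$, and then to leverage the freedom in the construction to get infinitely many genuinely distinct, coprime solutions. Concretely, I would look for solutions of the shape
\begin{equation*}
y=z,\qquad x=x(t),\ w=w(t),
\end{equation*}
so that the equation $a(x^2-y^6)=b(z^6-w^6)$ collapses to $a x^2 = a y^6 + b y^6 - b w^6 = (a+b)y^6 - b w^6$ after substituting $z=y$. This is still too rigid, so instead I would treat it as a quadratic in $x$: for any fixed choice of $y,z,w\in\Z[t]$ the right-hand side $b(z^6-w^6)+ay^6$ must be $a$ times a perfect square. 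The strategy is therefore to arrange $y,z,w$ so that $ a y^6 + b(z^6-w^6)$ is forced to be a square times $a$; the cleanest way is to write $z^6-w^6=(z^2-w^2)(z^4+z^2w^2+w^4)$ and to choose the factors to collaborate with the $y^6$ term.

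The key technical step is to pass to the elliptic fibration mentioned in the introduction. I would set one of the variables (say $y$) to a constant or a fixed monomial in $t$, reducing \eqref{eq2666} to an equation defining a curve; for generic slicing this curve is of genus $1$, and I would exhibit an explicit rational point on it (for instance the trivial point coming from $x^2=y^6$, $z^6=w^6$, which we discard as a solution but may use as a base point for the group law). Translating to Weierstrass form, the task becomes showing the Mordell–Weil group has positive rank for infinitely many fibers, or, more elementarily, that a single non-torsion point exists whose multiples yield infinitely many fiber-solutions. The polynomial identity one obtains from a non-torsion point, cleared of denominators, gives a parametric solution $x(t),y(t),z(t),w(t)$; varying the base point or the slicing parameter produces the infinite family.

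To ensure \emph{coprimality} of $x,y,z,w$ I would argue as follows. A single parametric solution $(x_0,y_0,z_0,w_0)\in\Z[t]^4$ can always be reduced to coprime form by dividing through by $\gcd$, since the equation is homogeneous in the weighted sense ($x$ has weight $3$, while $y,z,w$ have weight $1$ under scaling $t\mapsto$ nothing—rather, scaling $(x,y,z,w)\mapsto(\lambda^3 x,\lambda y,\lambda z,\lambda w)$ preserves \eqref{eq2666}). Thus any common polynomial factor can be scaled out, and we may normalize to $\gcd(x,y,z,w)=1$. The step I expect to be the genuine obstacle is guaranteeing \textbf{infinitely many distinct} coprime solutions rather than one: I must rule out that all the solutions produced collapse to finitely many after scaling, and that they are not all trivial (lying on $x^2=y^6$ or $z^6=w^6$). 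The cleanest remedy is to embed the construction in a one-parameter family, say by introducing an auxiliary parameter $u$ so that solutions are indexed by $u\in\Q$, and then to verify that the degrees (in $t$) of the resulting polynomials grow with $u$, or that the induced point on the generic-fiber elliptic curve is non-torsion; either fact forces the infinitude.

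Finally, for the Euclidean-density assertion, I would use the polynomial family to produce, for a Zariski-dense set of specializations of $t$ and of the family parameter, rational points whose coordinates sweep out an open region of the real surface $S(\R)$. Since a non-constant polynomial map $\Q\to S(\Q)$ has image accumulating along a real arc, and since the family parameter gives a second independent direction of variation, the union of these arcs is dense in the two-dimensional real locus $S(\R)$. This reduces the density claim to checking that the map $(t,u)\mapsto(x,y,z,w)$ has surjective differential (nonvanishing Jacobian minor) at some real point, which is a routine verification once the explicit parametrization is in hand.
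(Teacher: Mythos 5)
Your proposal has three genuine gaps, two of them fatal as written. First, the slicing you propose (fixing $y$ to a constant or a monomial in $t$) does not produce genus-one curves: $ax^2=ay_0^6+b(z^6-w^6)$ is a double cover of $\mathbb{P}^1$ branched at the six roots of a sextic, hence of genus $2$. The fibration that works comes from the substitution $x=y^3+t(z^3-w^3)$, which factors the equation and reduces the problem to the plane cubic $\mathcal{C}:\;2aty^3=-(at^2-b)z^3+(at^2+b)w^3$. Second -- and this is the heart of the matter -- that cubic has \emph{no} $\Q(t)$-rational point for generic $a,b$: specializing at $t=2$, $a=b=1$ yields the Selmer cubic $3z^3+4y^3+5w^3=0$. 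Your suggested base point ``coming from $x^2=y^6$, $z^6=w^6$'' does not lie on $\mathcal{C}$ (a point with $y=0$ would force $(at^2+b)/(at^2-b)$ to be a cube in $\Q(t)$). The essential trick, absent from your proposal, is the non-invertible base change $t\mapsto t^3$, after which $(y,z,w)=(t,-1,1)$ lies on the new cubic and the tangent process produces a point of infinite order on the associated Weierstrass curve $Y^2=X^3-27a^2(at^6-b)^2(at^6+b)^2$; its multiples give the infinitely many coprime polynomial solutions. (Coprimality is not a matter of ``dividing by the gcd'': the equation is not linear, so you need the weighted bookkeeping that a point of the homogeneous cubic may be taken with $\gcd(y,z,w)=1$, whence $x=y^3+t^3(z^3-w^3)$ is automatically coprime to them.)

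Third, the Euclidean-density argument cannot work as you describe. The surface $S$ is a smooth degree-$6$ hypersurface in $\mathbb{P}(3,1,1,1)$ with trivial canonical class, i.e.\ a K3 surface, so it admits no dominant rational map from $\mathbb{A}^2$; the two-parameter family with surjective differential that your last step requires does not exist, and a countable union of real arcs is not automatically dense in a real surface. The correct argument is fiberwise: Silverman's specialization theorem shows that for all but finitely many rational $t_0$ the fiber $\mathcal{E}_{t_0}$ has positive rank (one must also bound, via division polynomials, the finitely many $t_0$ at which the chosen section specializes to a torsion point), and Hurwitz's theorem then gives density of $\mathcal{E}_{t_0}(\Q)$ in $\mathcal{E}_{t_0}(\R)$; since the admissible parameters (cubes of rationals minus a finite set) are dense in $\R$, the union of these real-dense fibers is dense in $S(\R)$. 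These two theorems are the missing ingredients, and without them the density claim is unsupported.
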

\begin{proof}
Set $x=y^3+t(z^3-w^3)$, for an indeterminate $t$ assumed throughout to be non-zero. Then
\begin{equation*}
a(x^2-y^6)-b(z^6-w^6)=(z^3-w^3)(2a t y^3 + (a t^2-b)z^3-(a t^2+b)w^3),
\end{equation*}
and the problem reduces to investigation of the smooth cubic curve
\begin{equation*}
\cal{C}:\; 2 a t y^3 = -(a t^2-b) z^3 + (a t^2+b) w^3.
\end{equation*}
%The set $\cal{C}(\Q(t))$ is easily seen to be empty for generic $a,b$.
Since the equation
defining the curve $\cal{C}$ is homogeneous, a point $(y:z:w) \in \cal{C}(\Q(t))$
may be assumed to have coordinates which are polynomials in $t$. Localizing at $t=2$
results in $4 a y_2^3 = -(4a-b) z_2^3 + (4a+b) w_2^3$, which cannot have solutions
in $\Q(a,b)$ since at $a=b=1$ the equation becomes $3 z_2^3 + 4 y_2^3 + 5(-w_2)^3=0$
over $\Q$, the famous Selmer cubic with no non-zero rational solution. Thus the set
$\cal{C}(\Q(t))$ is empty for generic $a,b$.

Consider the non-invertible change of variable
given by $\phi: \Q \rightarrow \Q$ where $\phi(t)=t^3$; note that the set $\phi(\Q)$
is dense in $\Q$, which property will be used later. The corresponding curve $\cal{C}_\phi$
takes the form
\[ \cal{C}_\phi:\; 2 a t^3 y^3 = -(a t^6-b) z^3 + (a t^6+b) w^3, \]
containing the point $P(y,z,w)=(t,-1,1)$, and hence is an elliptic curve over $\Q(t)$.
The tangent to $\cal{C}_\phi$ at $P$ meets the curve again at
$Q(y,z,w)=(2 b t, 3 a t^6+b, 3 a t^6-b)$, and
\begin{equation}
\label{specsol}
(x,y,z,w) = (2 b t^3 (27 a^2 t^{12}+ 5 b^2), \; 2 b t, \; 3 a t^6+b, \; 3 a t^6-b)
\end{equation}
shows us that the equation (\ref{eq2666}) has infinitely many solutions
in coprime integers.
To prove that there are infinitely many polynomial solutions, note that with $P$ as the origin
of the group law, then $\cal{C}_\phi$ is birationally equivalent to the elliptic cubic with
Weierstrass equation
\begin{equation*}
\cal{E}:\; Y^2=X^3-27a^2(at^6-b)^2(at^6+b)^2.
\end{equation*}
The image of $Q$ on $\cal{E}$ is the point
\begin{equation}
\label{Rorder}
R=((3a^2 t^{12} + b^2)/t^4,  b(-9a^2 t^{12} + b^2)/t^6 );
\end{equation}
and by characterization of the torsion points on curves of type $y^2=x^3+D$ (see,
for example, Silverman~\cite[p. 323]{Sil}, we have that $R$ is of infinite order.
The points $mR$, for $m=2,3,4,\ldots$, pull back to points $(y_m,z_m,w_m)$ on $\cal{C}_\phi$,
with $\gcd(y_m,z_m,w_m)=1$, and then $(x_m,y_m,z_m,w_m)=(y_m^3+t^3(z_m^3-w_m^3),y_m,z_m,w_m)$
gives infinitely many coprime polynomial solutions of (\ref{eq2666}).

We will prove that the set of rational points on the surface $S:\;a(x^2-y^6)=b(z^6-w^6)$ is dense in the Euclidean topology.
However, we first prove Zariski density of the set of rational points. It is clear that in order to prove this result it is enough to prove that the set of rational points is dense in the Zariski topology on $\cal{C}_{\phi}$. Because the curve $\cal{E}$ is of
positive rank over $\Q(t)$, the set of multiples of the point $R$, i.e. $mR=(X_{m}(t),\;Y_{m}(t))$ for $m=1,2,\ldots$\;, gives
infinitely many $\Q(t)$-rational points on the curve $\cal{E}$. Now, regarding $\cal{E}$ as an elliptic surface in the
space with coordinates $(X,Y,t)$ we see that each rational curve $(X_{m},Y_{m},t)$ is included in the Zariski closure, say $\cal{R}$,
of the set of rational points on $\cal{E}$. Because this closure consists of only finitely many components, it has dimension two, and
as the surface $\cal{E}$ is irreducible, $\cal{R}$ is the whole surface. Thus the set of rational points on $\cal{E}$ is dense in
the Zariski topology and the same is true for $\cal{C}_{\phi}$ and thus for $\cal{C}$ and $S$. This follows from the fact that $\cal{C}_{\phi}$ comes from $\cal{C}\simeq S$ after a non-invertible change of variable.

To obtain the density of the set $\cal{E}(\Q)$ in the Euclidean topology, we use two
results:  a theorem of Hurwitz \cite{Hur} (see also \cite[p. 78]{Sko}) and a theorem of
Silverman \cite[p. 368]{Sil}. The theorem of Hurwitz states that if an elliptic curve $E$ defined over $\Q$ has positive
rank and at most one torsion point of order two (defined over $\Q$) then the
set $E(\Q)$ is dense in $E(\R)$. The same result holds if $E$ has three torsion points of order two under the assumption that we have
a rational point of infinite order on the bounded branch of the set $E(\R)$.

Silverman's theorem states that if $\cal{E}$ is an elliptic curve defined over $\Q(t)$ with positive rank, then for all but finitely
many $t_{0}\in\Q$, the curve $\cal{E}_{t_{0}}$ obtained from the curve $\cal{E}$ by specialization at $t=t_{0}$ has positive rank.
From this result we see that for all but finitely many $t\in\Q$ the elliptic curve $\cal{E}_{t}$ is of positive rank.
Let us denote by $\cal{G}$ the set of $t\in\Q$ such that the specialization $R_{t}$ of the point $R$ at $t$ is of finite order on the curve $\cal{E}_{t}$. From the remark at the beginning of this section we know that the order of a torsion point on the curve $\cal{E}_{t}$ is at most six. Thus, in order to find $\cal{G}$ it is enough to find all $t\in\Q$ such that $R_{t}$ has finite order $\leq 6$, that is, we need to characterize all $t\in\Q$ such that $mR_{t}=\cal{O}$ for some $m\in\{1,2,3,4,6\}$.
The assumption on $a, b$ immediately implies that there are at most two specializations of $t\in\Q$
such that the $Y$ coordinate of the point $R$ is equal to zero, i.e. the order of the point $R$ is two,
and these specializations correspond to the rational roots of the equation $(3at^6-b)(3at^6+b)=0$.
The vanishing of the $3$-division polynomial of $\cal{E}$ evaluated at $X(R_{t})$ determines
a polynomial of degree $4$ in $t^{12}$, so there can be at most $8$ rational roots $t$, and so at most
$8$ values of $t\in\Q$ leading to $R_{t}$ of order three. Similarly, the vanishing of the $4$-division
polynomial at $X(R_{t})$ determines a polynomial of degree $8$ in $t^{12}$, so there are at most
$16$ rational values of $t$ leading to $R_{t}$ of order four. Finally, a point of order six can occur
if and only if $-27a^2(at^6-b)^2(at^6+b)^2$ equals a sixth power, which is clearly impossible.
To sum up, there are at most $2+8+16=26$ rational specializations of $t$ such that the point
$R_{t}$ has finite order, and thus $|G|\leq 26$ (note also that $t=0$ is forbidden).

Using the Silverman theorem we deduce that for all $t\in\Q\setminus G$ the curve $\cal{E}_{t}$ is of
positive rank. The Hurwitz theorem now implies that the set $\cal{E}_{t}(\Q)$ is dense in $\cal{E}_{t}(\R)$.
It follows that $\cal{C}_\phi(t)(\Q)$ is dense in $\cal{C}_\phi(t)(\R)$ because the image of $\Q$
by the function $\phi(t)=t^3$ is dense in the Euclidean topology of $\R$ (since $\overline{\phi(\Q)}=\R$).
If we put now $H=\phi(\Q)\subset \Q$ we get that for all but finitely many $t\in H$ the set $\cal{C}_{t}(\Q)$
is dense in $\cal{C}_{t}(\R)$. Because $H$ is dense in $\R$, it follows that the set $S(\Q)$ of rational points on $S$ is dense in the Euclidean topology in $S(\R)$. The theorem is proved.
\end{proof}
%\begin{rem}
%{\rm If we are interested only in polynomial solutions of the equation $a(x^2-y^6)=b(z^6-w^6)$ then one can use simpler reasoning then the one presented in the proof of previous theorem. Indeed, it is enough to define the sequences
%\begin{equation*}
%\begin{array}{lll}
%  y_{1}=t, & z_{1}=1, & w_{1}=1, \\
%  y_{n+1}= y_{n}(Ay_{n}^3+2Bz_{n}^3),& z_{n+1}=-z_{n}(2Ay_{n}^3+Bz_{n}^3), &
%  w_{n+1}=(Ay_{n}^3-Bz_{n}^3)w_{n},
%\end{array}
%\end{equation*}
%for $n\geq 1$. Then $a(x_{n}^2-y_{n}^6)=b(z_{n}^6-w_{n}^6)$, where $x_{n}=y_{n}^3+t^3(z_{n}^3-w_{n}^3)$. These polynomials satisfy our equation because  $2^{n-1}P=(y_{n}/w_{n},z_{n}/w_{n})$ lies on the curve $\cal{C}_{\phi}$. Here $P=(t,1)$. In order to get coprime solutions we clear common factors (note that if $y,z,w$
%has common factor $h$ then clearly $h^3|x$ and we can get rid off of $h$) and get coprime polynomial solutions of our equation.
%}
%\end{rem}
\begin{cor}\label{cor2}
Fix $a,b\in\Z\setminus\{0\}$ and $n\in\N_{+}$. The diophantine equation
\begin{equation}\label{eq26n66}
a(x^2-y^{6n})=b(z^6-w^6)
\end{equation}
has infinitely many solutions in integers. Moreover, if $b=1$, then
the equation has infinitely many solutions in coprime integers.
\end{cor}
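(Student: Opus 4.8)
The plan is to exploit the identity $y^{6n}=(y^n)^6$, which reveals that equation (\ref{eq26n66}) is nothing but equation (\ref{eq2666}) of Theorem \ref{thm1} pulled back under the substitution $y\mapsto y^n$. Consequently any solution $(x_0,Y_0,z_0,w_0)$ of $a(x^2-y^6)=b(z^6-w^6)$ whose $y$-coordinate $Y_0$ is a perfect $n$-th power, say $Y_0=v^n$, at once yields the solution $(x_0,v,z_0,w_0)$ of (\ref{eq26n66}), since then $v^{6n}=Y_0^6$. The task therefore reduces to producing infinitely many solutions of (\ref{eq2666}) whose $y$-coordinate is an $n$-th power.

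First I would return to the explicit family (\ref{specsol}),
\[ (x,y,z,w)=\bigl(2bt^3(27a^2t^{12}+5b^2),\;2bt,\;3at^6+b,\;3at^6-b\bigr), \]
whose $y$-coordinate is simply $2bt$. Choosing $t=(2b)^{n-1}s^n$ for a nonzero integer $s$ makes $y=2bt=(2bs)^n$ a perfect $n$-th power, so with $v=2bs$ the quadruple $(x_0,v,z_0,w_0)$ obtained by this specialization satisfies (\ref{eq26n66}). Since $z_0=3at^6+b$ grows with $|s|$ the resulting solutions are infinite in number, and because $z_0-w_0=2b\neq0$ they avoid the trivial locus $z^6=w^6$; this settles the first assertion.

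For the coprimality statement I would take $b=1$ and examine $\gcd(x_0,v,z_0,w_0)$ for the same family, now with $t=2^{n-1}s^n$. The key point is that $z_0-w_0=2$, so every common divisor of $z_0$ and $w_0$ divides $2$; and since $z_0=3at^6+1$ is odd whenever $t$ is even, the numbers $z_0$ and $w_0$ are coprime for every even $t$. For $n\ge2$ the factor $2^{n-1}$ forces $t$ even automatically, while for $n=1$ the equation (\ref{eq26n66}) coincides with (\ref{eq2666}) and the coprime polynomial solutions of Theorem \ref{thm1} apply directly (or one simply takes $s$ even). In each case $\gcd(z_0,w_0)=1$ forces $\gcd(x_0,v,z_0,w_0)=1$, and letting $s$ range over a suitable infinite set produces infinitely many coprime integer solutions.

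The argument presents no real obstacle: essentially all the work is done by Theorem \ref{thm1} together with the observation that (\ref{eq26n66}) is a pullback of (\ref{eq2666}) under $y\mapsto y^n$. The only things requiring care are arranging the parametric $y$-coordinate $2bt$ to be an $n$-th power while keeping the other coordinates integral, and the mild coprimality bookkeeping, both of which reduce to the elementary fact that $z_0$ and $w_0$ differ by $2b$.
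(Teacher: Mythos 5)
Your proposal is correct and follows essentially the same route as the paper: both substitute $t=(2b)^{n-1}s^n$ into the explicit family (\ref{specsol}) so that $y=2bt=(2bs)^n$ becomes an $n$-th power, reducing (\ref{eq26n66}) to (\ref{eq2666}), and both use $z-w=2b$ for the coprimality claim when $b=1$. Your extra care with the $n=1$ case (forcing $t$ even so that $\gcd(z,w)=1$ rather than $2$) is a small refinement the paper glosses over, but it does not change the argument.
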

\begin{proof}
Substituting $t=(2b)^{n-1}T^n$ into the expression for $x,y,z,w$ in
(\ref{specsol}) we get a polynomial solution of (\ref{eq26n66}).
Unfortunately this solution has the common factor $b$. However,
if $b=1$ then we get a solution of the equation
$a(x^2-y^{6n})=z^6-w^6$ in the following form:
\begin{equation*}
\begin{array}{ll}
  x=2^{3n-2}T^{3n}(27a^22^{12(n-1)}T^{12n}+5), & y=2T, \\
                                              & \\
  z=3a2^{6(n-1)}T^{6n}+1, & w=3a2^{6(n-1)}T^{6n}-1.
\end{array}
\end{equation*}
This solution clearly satisfies the condition $1=\gcd(z,w)=\gcd(x,y,z,w)$.
\end{proof}

\section{The equation $a(x^{p}-y^{q})=b(z^{r}-w^{s})$ with $(p,q,r,s)=(2,4,8,8),(2,8,4,8)$}\label{sec3}

In this section we construct polynomial solutions of the equation
\begin{equation}\label{eq2488}
a(x^p-y^q)=b(z^r-w^s),
\end{equation}
where $a,b$ are coprime integers and
$(p,q,r,s)\in\{(2,4,8,8),(2,8,4,8)\}$.

First, a simple lemma.
\begin{lem}\label{lem1}
Consider the elliptic quartic $E: \; y^2=\al x^4+\be$, $\al+\be=\ga^2$, $\al,\be,\ga \in \Z$.
If $\al\be \neq \Box$ and $\al\be \neq -1$ mod fourth powers, then the curve has torsion group
of order 2; and  since $E$ contains the four points $(\pm 1, \pm \ga)$, then $E$ has positive rank.
\end{lem}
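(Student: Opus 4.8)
The plan is to prove the two claims separately: first that the torsion is exactly $\Z/2\Z$, then that the rank is positive. Since the curve $E: y^2 = \al x^4 + \be$ is given as an elliptic quartic, I would first fix a rational base point to put it in Weierstrass form. The natural choice is one of the guaranteed rational points $(\pm 1, \pm\ga)$ (these lie on $E$ because $\al + \be = \ga^2$), or else the point at infinity in the $x$-direction where $y^2 \sim \al x^4$; taking the latter as origin, a standard change of variables transforms $y^2 = \al x^4 + \be$ into a Weierstrass cubic. A cleaner route is to substitute $u = x^2$, giving the conic-like relation $y^2 = \al u^2 + \be$, and to recognize that the quartic is a $2$-cover; the associated Jacobian has Weierstrass model $Y^2 = X^3 + 4\al\be X$ (up to the usual normalization of quartic-to-Weierstrass reduction). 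I would carry out this reduction explicitly to identify the $j$-invariant and the structure of the $2$-torsion.

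The torsion analysis then reduces to examining $2$-torsion on a curve of the form $Y^2 = X(X^2 + 4\al\be)$ (equivalently $Y^2 = X^3 + 4\al\be X$). This curve has full rational $2$-torsion precisely when $-4\al\be$, hence $-\al\be$, is a rational square, and has a unique rational $2$-torsion point (the point $(0,0)$) otherwise. The hypothesis $\al\be \neq \Box$ ensures $-\al\be$ need not be a square, but I must be careful: the condition preventing full $2$-torsion is that $-\al\be$ is not a square, which is not literally the stated hypothesis. I expect the two excluded conditions, $\al\be \neq \Box$ and $\al\be \neq -1$ mod fourth powers, to be exactly what rules out the extra torsion from the curve's extra automorphisms. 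The curve $Y^2 = X^3 + cX$ has $j = 1728$ and carries an order-$4$ automorphism; its torsion can jump to $\Z/4\Z$ or $\Z/2\Z \times \Z/2\Z$ only when $c$ or $-c$ is a fourth power or a square of special type. I would invoke the standard classification of torsion on curves $y^2 = x^3 + Dx$ (as referenced via Silverman for the companion family $y^2 = x^3 + D$) to show that, away from the two excluded arithmetic conditions, the torsion subgroup is exactly $\Z/2\Z$, generated by $(0,0)$.

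For positive rank, the key observation is already supplied: $E$ contains the four points $(\pm 1, \pm\ga)$. Pulling these back to the Weierstrass model, I would show they map to nontrivial points, and then argue that they cannot all be torsion. Since the full torsion subgroup has order $2$, it contains only the identity and the single $2$-torsion point; four points mapping into a two-element group would force coincidences among the $(\pm 1, \pm\ga)$ or their images, and I would check that the images of, say, $(1,\ga)$ and $(1,-\ga)$ are distinct and of infinite order. Concretely, if one of these points were torsion it would have to be the order-$2$ point with $Y$-coordinate zero, but the images of $(\pm 1, \pm\ga)$ have nonzero $Y$-coordinate (coming from $\ga \neq 0$, guaranteed since $\al\be \neq \Box$ forces $\be \neq 0$ and $\ga^2 = \al+\be$), so none of them is the $2$-torsion point; having established the torsion is just $\Z/2\Z$, any nontrivial non-$2$-torsion point has infinite order, giving rank $\geq 1$.

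The main obstacle I anticipate is matching the stated hypotheses ($\al\be \neq \Box$ and $\al\be \neq -1$ mod fourth powers) precisely to the classification of torsion growth on the $j=1728$ curve. The naive square condition controls $\Z/2\Z \times \Z/2\Z$, but the possibility of a rational point of order $4$ (which exists when the relevant quantity is a fourth power, accounting for the ``mod fourth powers'' phrasing) is the subtle case, and verifying that excluding $\al\be \equiv -1 \pmod{\text{fourth powers}}$ kills exactly this order-$4$ torsion is where the real care is needed. Once the torsion is pinned down to order $2$, the rank argument via the explicit points is routine.
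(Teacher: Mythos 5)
Your strategy is the same as the paper's: pass to the Weierstrass model $Y^2=X(X^2+D)$ and invoke the classification of torsion on such curves, then use the four points $(\pm1,\pm\ga)$ to produce a non-torsion point. But there is a concrete error which you half-detect and then defer rather than resolve: the Jacobian of $y^2=\al x^4+\be$ is $Y^2=X(X^2-4\al\be)$, not $Y^2=X^3+4\al\be X$. The sign is not absorbed by ``the usual normalization,'' and it is precisely the source of the mismatch you flag. With $D=-4\al\be$, the standard classification (torsion $\Z/2\Z\times\Z/2\Z$ iff $-D$ is a square; torsion $\Z/4\Z$ iff $D\equiv 4$ modulo fourth powers; torsion $\Z/2\Z$ otherwise) translates verbatim into the two stated hypotheses: $-D=4\al\be$ is a square iff $\al\be=\Box$, and $D=-4\al\be\equiv 4$ modulo fourth powers iff $\al\be\equiv -1$ modulo fourth powers. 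So the step you describe as ``where the real care is needed'' disappears once the sign is corrected; whereas with your $D=+4\al\be$ the relevant conditions would be ``$-\al\be$ not a square'' and ``$\al\be\not\equiv 1$ mod fourth powers,'' which genuinely do not match the lemma, and the proof does not close. A sanity check: $y^2=x^4+1$ has Jacobian $Y^2=X^3-4X$, with full rational $2$-torsion and rank $0$ (consistent with Fermat), while $Y^2=X^3+4X$ has torsion $\Z/4\Z$.

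Two smaller points. Your derivation of $\ga\neq 0$ is wrong: $\be\neq 0$ does not force $\al+\be\neq 0$ (take $\al=2$, $\be=-2$: both hypotheses hold, yet $\ga=0$ and the four points collapse to the Weierstrass points $(\pm1,0)$, which are torsion). The lemma tacitly assumes the four points are distinct; in the paper's applications $\ga=t$ is a nonzero indeterminate, so this is harmless, but it should be assumed rather than deduced. For the rank itself, you do not need to compute $Y$-coordinates of the images: the birational map sends the four distinct points to four distinct rational points on the Weierstrass model, one of which is the origin; since the torsion subgroup has order $2$, at most one of the remaining three can be torsion, so at least two have infinite order.
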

\begin{proof}
The curve $E$ is birationally equivalent to the Weierstrass cubic $Y^2=X(X^2-4\al\be)$,
and the result follows from characterization of torsion on curves of the form $Y^2=X(X^2+D)$:
see, for example, Silverman~\cite[p. 311]{Sil}.
\end{proof}

\begin{thm}\label{thm2}
Let $a,b\in\Z\setminus\{0\}$. If $(p,q,r,s)\in\{(2,4,8,8),\;
(2,8,4,8)\}$ then the diophantine equation
$a(x^{p}-y^{q})=b(z^{r}-w^{s})$ has infinitely many
solutions in coprime polynomials $x, y, z, w\in\Z[t]$.
\end{thm}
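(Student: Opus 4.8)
The plan is to mimic the argument of Theorem~\ref{thm1}: in each case I would use a substitution linear in $t$ to factor off a curve and reduce the surface to a genus-one curve of the shape treated in Lemma~\ref{lem1}, namely $Y^2=\al X^4+\be$ with $\al+\be=\ga^2$, and then invoke that lemma to obtain positive rank over the relevant function field. For $(p,q,r,s)=(2,4,8,8)$ the equation reads $a(x^2-y^4)=b(z^8-w^8)$; writing $z^8-w^8=(z^4-w^4)(z^4+w^4)$ and setting $x=y^2+t(z^4-w^4)$ gives
\begin{equation*}
a(x^2-y^4)-b(z^8-w^8)=(z^4-w^4)\bigl(2aty^2+(at^2-b)z^4-(at^2+b)w^4\bigr),
\end{equation*}
so the problem reduces to $\cal{C}:\,2aty^2=(b-at^2)z^4+(at^2+b)w^4$. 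Dehomogenising at $w=1$ and dividing by $2at$ puts this in the form $Y^2=\al z^4+\be$ with $\al+\be=b/(at)$, which is not a square in $\Q(t)$, so there is no free rational point yet.

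To create the point needed to make $\cal{C}$ elliptic I would apply the non-invertible substitution $t=b/(au^2)$, in the spirit of the map $\phi$ of Theorem~\ref{thm1}; after clearing denominators the curve becomes
\begin{equation*}
\cal{C}_\phi:\; 2au^2y^2=(au^4-b)z^4+(au^4+b)w^4,
\end{equation*}
for which $\al+\be=u^2$ and the four points $(z,y)=(\pm1,\pm u)$ lie on the curve. Here $\al\be=(a^2u^8-b^2)/(4a^2u^4)$; since $au^4-b$ and $au^4+b$ are coprime separable polynomials, their product is not a square, so $\al\be\neq\Box$ in $\Q(u)$ (and generically $\al\be\neq-1$ modulo fourth powers). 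Lemma~\ref{lem1} then gives that the curve has $2$-torsion only, and hence positive rank over $\Q(u)$.

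For $(p,q,r,s)=(2,8,4,8)$ the equation is $a(x^2-y^8)=b(z^4-w^8)$, and the analogous substitution $x=y^4+t(z^2-w^4)$ reduces it to
\begin{equation*}
\cal{C}:\; 2aty^4+(at^2-b)z^2-(at^2+b)w^4=0,
\end{equation*}
which at $w=1$ takes the form $z^2=\al y^4+\be$ with $\al=-2at/(at^2-b)$, $\be=(at^2+b)/(at^2-b)$, and $\al+\be=(at^2-2at+b)/(at^2-b)$. No constant substitution makes this last quantity a square, and this is the delicate point. I would instead parametrise the conic $\ga^2(at^2-b)=at^2-2at+b$: it carries the rational point $(t,\ga)=(b/a,1)$, and projection from this point yields a rational parametrisation $t=t(m)$, $\ga=\ga(m)$. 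Substituting $t=t(m)$ makes $\al+\be=\ga(m)^2$ a square, so $(y,z)=(1,\ga)$ lies on $\cal{C}$ and renders it elliptic over $\Q(m)$; since $\al\be=-2at(at^2+b)/(at^2-b)^2$ is, as an element of $\Q(m)$, generically neither a square nor $-1$ modulo fourth powers, Lemma~\ref{lem1} again gives positive rank.

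In both cases positive rank over the function field furnishes a point of infinite order whose multiples produce infinitely many $\Q(t)$-points on $\cal{C}$ (respectively $\cal{C}_\phi$); pulling these back through the defining substitution and clearing the constant denominators introduced above yields infinitely many polynomial solutions $(x,y,z,w)$, which as in Theorem~\ref{thm1} can be normalised to have $\gcd=1$, and whose degrees grow so they avoid the discarded trivial families. The main obstacle is the second case, namely producing the auxiliary rational point on $\cal{C}$, which I resolve by exhibiting the point $(b/a,1)$ on the associated conic; once that point and the non-squareness hypotheses of Lemma~\ref{lem1} are in hand, the generation of infinitely many coprime polynomial solutions is routine.
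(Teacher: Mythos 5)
Your overall strategy --- factor the right-hand side, substitute for $x$ so that the surface fibres into quartic curves of the shape in Lemma~\ref{lem1}, deduce positive rank, and pull back multiples of a non-torsion point --- is the right one, and your $(2,4,8,8)$ case in fact lands on exactly the curve $2au^2y^2=(au^4-b)z^4+(au^4+b)w^4$ obtained from the single substitution $x=-y^2+t^2(z^4+w^4)$; indeed your $x=y^2+\tfrac{b}{au^2}(z^4-w^4)$ simplifies, using the curve equation, to $-y^2+u^2(z^4+w^4)$, so the denominators you worry about clear automatically there. For $(2,8,4,8)$ the analogous substitution $x=-y^4+t^4(z^2+w^4)$ yields a quartic with $\al+\be=1$ on the nose, which makes your conic-parametrisation detour unnecessary; your detour is not wrong in principle, but the assertions that after substituting $t=t(m)$ the quantity $\al\be$ is ``generically'' neither a square nor $-1$ modulo fourth powers are left unverified, and with rational-function coefficients in $m$ the bookkeeping needed to return to polynomial solutions becomes genuinely messier.

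The genuine gap is the coprimality claim, which is the actual content of the theorem beyond ``infinitely many polynomial solutions''. Your appeal to normalising the gcd ``as in Theorem~\ref{thm1}'' does not transfer: there the curve is a plane cubic in ordinary projective coordinates, so any $\Q(t)$-point scales to coprime polynomial coordinates and $x=y^3+t^3(z^3-w^3)$ inherits coprimality for free. Here the curves are quartics in weighted coordinates ($y$ of weight $2$, $z,w$ of weight $1$) with $t$-dependent coefficients. A common irreducible factor $f$ of $z$ and $w$ with $f\nmid t$ forces $f^2\mid y$ and can be divided out, but a common factor of $t$ cannot: from $2at^2y^2=(at^4-b)z^4+(at^4+b)w^4$ with $t\mid\gcd(z,w)$ one gets only $t\mid y$, whereas the descent $(y,z,w)\mapsto(y/t^2,z/t,w/t)$ requires $t^2\mid y$. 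So one must actually prove $t\nmid\gcd(z_m,w_m)$ for the pullbacks of the multiples $mQ$; this can be done by an induction on the duplication/addition formulae showing $z_m\equiv w_m\equiv b^{m^2-m}$ and $y_m\equiv 0$ modulo $t$. In your version the issue is compounded by the constants and denominators introduced by $t=b/(au^2)$ and by the conic parametrisation $t=t(m)$, whose removability from $(x,y,z,w)$ after clearing would also have to be justified. Without some such argument you have proved only that there are infinitely many polynomial solutions, not coprime ones.
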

\begin{proof}
Consider first the case $(p,q,r,s)=(2,4,8,8)$. Set $x=-y^2+t^2(z^4+w^4)$,
for an indeterminate $t$. Then
\begin{equation*}
a(x^2-y^4)-b(z^8-w^8)=-(z^4+w^4)(2at^2y^2-(at^4-b)z^4-(at^4+b)w^4).
\end{equation*}
We prove the desired result by showing that the curve
\begin{equation}
\cal{C}: \; 2at^2y^2 = (at^4-b)z^4 + (at^4+b)w^4
\end{equation}
has infinitely many solutions in the ring of polynomials $\Z[t]$.
Note that if $y, z, w\in\Z[t]$ satisfy this equation and
$t \nmid \gcd(z,w)$ then $\gcd(z,w)^2$ divides $y$. This implies
that we can assume $\gcd(y,z,w)=1$ in $\Z[t]$. Thus the solution will
be in coprime polynomials provided $t \nmid \gcd(z,w)$.\\

%\begin{lem}\label{lem2}
%If $(y,z,w)$ is a point on $\cal{C}$, then $t \mid z$, $t \mid w$, implies $t^2 \mid y$.
%So without loss of generality, it may be assumed that $t \nmid \gcd(z,w)$.
%\end{lem}

Now $\cal{C}$ contains the points $(y,z,w)=(\pm t, \pm 1, 1)$. If we take
$(t,1,1)$ as the point at infinity, then by the remarks of Lemma \ref{lem1},
the point $Q=(-t,1,1)$ is of infinite order. We have
\[ 2Q(y,z,w)=(t(-3b^4+4 a^4 t^{16}), \; b^2-2 a b t^4 -2 a^2 t^8, \; b^2+2 a b t^4 -2 a^2 t^8), \]
so that
\begin{align*}
& x= t^2(-7b^8+32a^2b^6t^8-88a^4b^4t^{16}+128a^6b^2t^{24}+16a^8t^{32}),\\
& y= t(-3b^4+4a^4t^{16}), \\
& z= b^2-2abt^4-2a^2t^8, \\
& w= b^2+2abt^4-2a^2t^8,
\end{align*}
gives infinitely many solutions of our equation in coprime integers.\\
Let $mQ=(y_m,z_m,w_m)$, $m=1,2,3,\dots$. The recurrence formulae that determine $mQ$ in terms
of $(m-1)Q$ are necessarily complicated, but it may be checked by induction using
the addition formula that
\[ z_m \equiv w_m \equiv b^{m^2-m} \bmod t, \qquad y_m \equiv 0 \bmod t, \]
for $m=1,2,3,...$.  Hence $t \nmid \op{gcd}(z_m,w_m)$, and the points $mQ$, together with $x_m=-y_m^2+t^2(z_m^4+w_m^4)$,
lead to infinitely many coprime polynomial solutions.
\bigskip

The proof in the case $(p,q,r,s)=(2,8,4,8)$ is similar. This time
set $x=-y^4+t^4(z^2+w^4)$ and get
\begin{equation*}
a(x^2-y^8)-b(z^4-w^8)=-(z^2+w^4)(bz^2-bw^4+2ay^4t^4-az^2t^8-aw^4t^8).
\end{equation*}
The curve
\[ \cal{C}: \; (b-a t^8)z^2 = (b+a t^8)w^4 - 2a t^4 y^4 \]
has points $(\pm y, \pm z,w)=(t,1,1)$, and so by Lemma \ref{lem1},
on taking $(t,1,1)$ as the origin, the point $Q=(-t,1,1)$ is of infinite
order. Then $2Q$ gives
\begin{align*}
x= & t^4(-79b^8+600ab^7t^8+3068a^2b^6t^{16}+18984a^3b^5t^{24}+101126a^4b^4t^{32}\\
   & \hskip 0.5cm +155112a^5b^3t^{40}+172604a^6b^2t^{48}+109848a^7bt^{56}+28561a^8t^{64}),\\
y= & t(-3b^2+6abt^8+13a^2t^{16}),\\
z= & b^4-52ab^3t^8-138a^2b^2t^{16}-340a^3bt^{24}-239a^4t^{32},\\
w= & b^2+14abt^8+a^2t^{16}.
\end{align*}
With similar reasoning to the previous case, we easily deduce the
existence of infinitely many coprime polynomial solutions to the
equation $a(x^2-y^8)=b(z^4-w^8)$.
\end{proof}

There is an immediate consequence from the above results.
\begin{cor}
Consider the surfaces
\begin{equation*}
\cal{S}_{1}:\;a(x^2-y^4)=b(z^8-1),\quad \cal{S}_{2}:\;a(x^2-y^8)=b(z^4-1).
\end{equation*}
The set of rational points on $\cal{S}_{i}$ is Zariski dense for $i=1,2$.
\end{cor}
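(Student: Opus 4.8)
The plan is to deduce Zariski density of rational points on each surface $\cal{S}_i$ as a direct corollary of the parametric solutions constructed in Theorem \ref{thm2}. Observe that $\cal{S}_1$ is exactly the specialization of the surface $a(x^2-y^4)=b(z^8-w^8)$ obtained by setting $w=1$, and similarly $\cal{S}_2$ is the surface $a(x^2-y^8)=b(z^4-w^8)$ with $w=1$. Theorem \ref{thm2} furnishes, for each of these two surfaces, an explicit $\Q(t)$-rational point $Q$ of infinite order on the relevant fiber curve $\cal{C}$, and hence an infinite sequence of coprime polynomial solutions $(x_m,y_m,z_m,w_m)$ coming from the multiples $mQ$.

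First I would make the dehomogenization precise. For $\cal{S}_1$, each multiple $mQ$ yields polynomials $y_m,z_m,w_m\in\Z[t]$ satisfying $2at^2y_m^2=(at^4-b)z_m^4+(at^4+b)w_m^4$, and after reconstructing $x_m=-y_m^2+t^2(z_m^4+w_m^4)$ one obtains a solution of $a(x_m^2-y_m^4)=b(z_m^8-w_m^8)$. To land on $\cal{S}_1$ itself I would divide through by $w_m$ in weighted-projective fashion, i.e.\ pass to the affine point $(x_m/w_m^4,\,y_m/w_m^2,\,z_m/w_m)$, which lies on $\cal{S}_1$ for every $t\in\Q$ where $w_m(t)\neq 0$. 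This produces, for each fixed $m$, a rational curve $t\mapsto(x_m/w_m^4,y_m/w_m^2,z_m/w_m)$ lying on $\cal{S}_1$; the analogous construction with the exponents permuted handles $\cal{S}_2$.

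Next I would invoke the dimension argument already used in the proof of Theorem \ref{thm1}. Regarding the collection of these rational curves inside the threefold ambient space, each curve lies in the Zariski closure of $\cal{S}_i(\Q)$. As $m$ ranges over $\N_{+}$ these curves are genuinely distinct (the multiples $mQ$ are distinct points on a positive-rank curve), so the closure cannot be a finite union of curves; since a Zariski-closed set has only finitely many irreducible components, the closure must have dimension two. Because $\cal{S}_i$ is an irreducible surface, its closure is all of $\cal{S}_i$, giving the desired Zariski density.

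The main obstacle, and the only point requiring genuine care, is confirming that the curves $t\mapsto mR$ (equivalently the images of $mQ$) are truly distinct as algebraic curves in the ambient space, rather than collapsing onto finitely many components; this is precisely what prevents the closure from being one-dimensional. Here I would lean on the infinite order of $Q$ over $\Q(t)$ established via Lemma \ref{lem1}: distinct multiples $mQ$ give distinct $\Q(t)$-points, hence distinct sections of the elliptic fibration, hence distinct curves on $\cal{S}_i$. Everything else — the dehomogenization and the dimension count — is routine once this separation is in hand, so the corollary follows with essentially no new computation.
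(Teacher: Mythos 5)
Your proposal is correct and follows essentially the same route as the paper: the authors likewise deduce the corollary from the infinitely many rational curves on $\cal{S}_i$ supplied by Theorem \ref{thm2} together with the finitely-many-components dimension count from the proof of Theorem \ref{thm1}. Your added care about the dehomogenization at $w=1$ and the distinctness of the sections $mQ$ simply makes explicit what the paper leaves implicit.
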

\begin{proof}
This follows from the existence of infinitely many rational curves lying on $\cal{S}_i$, $i=1,2$, and
the reasoning given in the proof of Theorem 2.1.
\end{proof}

\section{The equation $a(x^{p}-y^{q})=b(z^{r}-w^{s})$ with $(p,q,r,s)\in\{(2,4,6,12),(2,6,4,12),(2,12,4,6)\}$}\label{sec4}

In this section we are interested in constructing polynomial solutions of the equation
\begin{equation}\label{eq24612}
a(x^p-y^q)=b(z^r-w^s),
\end{equation}
where $a,b$ are coprime integers and
$(p,q,r,s)\in\{(2,4,6,12),(2,6,4,12),(2,12,4,6)\}$. We prove the following:

\begin{thm}\label{thm3}
Let $a,b\in\Z\setminus\{0\}$. The diophantine equation
$a(x^2-y^4)=b(z^6-w^{12})$ has infinitely many coprime
solutions in the ring of polynomials $\Z[t]$.
\end{thm}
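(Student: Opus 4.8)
The plan is to mimic the elliptic–fibration argument of Section~\ref{sec2}. Factoring the right-hand side as a difference of squares, $z^6-w^{12}=(z^3-w^6)(z^3+w^6)$, I would introduce an indeterminate $t$ and set $x=y^2+t(z^3-w^6)$. A direct expansion gives
\[
a(x^2-y^4)-b(z^6-w^{12})=(z^3-w^6)\bigl(2aty^2-(b-at^2)z^3-(at^2+b)w^6\bigr),
\]
so any point on the curve
\[
\cal{C}:\;2aty^2=(b-at^2)z^3+(at^2+b)w^6
\]
(a genus-one curve in the weighted projective plane $\mathbb{P}(3,2,1)$, where $y,z,w$ have weights $3,2,1$) produces a solution of \eqref{eq24612}. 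As in Section~\ref{sec2}, $\cal{C}$ has no obvious $\Q(t)$-point: putting $(z,w)=(-1,1)$ forces $y^2=t$. I would therefore apply the non-invertible substitution $\phi(t)=t^2$, obtaining
\[
\cal{C}_\phi:\;2at^2y^2=(b-at^4)z^3+(at^4+b)w^6,
\]
which now contains the point $P=(t,-1,1)$ and is an elliptic curve over $\Q(t)$; moreover every $\Q(t)$-point $(y,z,w)$ of $\cal{C}_\phi$ gives a solution of \eqref{eq24612} through $x=y^2+t^2(z^3-w^6)$, an identity one checks directly.

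Dehomogenizing at $w=1$ and clearing denominators, $\cal{C}_\phi$ is birational over $\Q(t)$ to
\[
\cal{E}:\;Y^2=X^3+D(t),\qquad D(t)=\frac{(b-at^4)^2(at^4+b)}{8a^3t^6},
\]
with $P$ carried to $R=\bigl(\tfrac{at^4-b}{2at^2},\,\tfrac{b-at^4}{2at}\bigr)$. To prove $R$ has infinite order I would use the description of torsion on curves $Y^2=X^3+D$ (Silverman~\cite[p.~323]{Sil}), for which the torsion subgroup is cyclic of order dividing $6$. The point $R$ is not $2$-torsion, since its $Y$-coordinate $\tfrac{b-at^4}{2at}$ is not identically $0$; it is not $3$-torsion, since the $3$-division polynomial $3X(X^3+4D)$ vanishes at neither $X(R)=0$ nor $X(R)^3=-4D$ (the latter would force $5at^4=-3b$); and the curve has no $6$-torsion, because $D(t)$ is not a sixth power in $\Q(t)$, the coprime factor $at^4+b$ occurring to the first power. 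Hence $R$, and with it $P$, has infinite order.

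Consequently the multiples $mR$ give infinitely many $\Q(t)$-points of $\cal{E}$, which pull back to infinitely many $\Q(t)$-points $(y_m,z_m,w_m)$ of $\cal{C}_\phi$; clearing denominators and setting $x_m=y_m^2+t^2(z_m^3-w_m^6)$ yields infinitely many polynomial solutions of \eqref{eq24612}, the duplication of $P$ along its tangent line already exhibiting an explicit one, exactly as the point $Q$ is produced in Section~\ref{sec2}. The remaining point---and the genuine difficulty---is coprimality. Since $x_m$ is a combination of $y_m,z_m,w_m$, it suffices that $\gcd(z_m,w_m)=1$, whence $\gcd(x_m,y_m,z_m,w_m)=1$. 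When the affine coordinates of $mP$ are cleared to $\mathbb{P}(3,2,1)$, the denominators are supported on $t$ together with the finitely many primes dividing the constant $2a$ and the bad-reduction factors $b-at^4$ and $at^4+b$, and the hard part is to control these uniformly in $m$. I expect this to be handled exactly as in Section~\ref{sec3}, by an induction on the group law showing that, modulo $t$, the coordinate $z_m$ is a nonzero constant while $w_m$ is a power of $t$ (with the residual constant and parity conditions checked separately), so that $t\nmid\gcd(z_m,w_m)$ and the solutions may be taken coprime for all $m$.
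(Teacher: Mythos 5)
Your construction of infinitely many polynomial solutions is sound and is essentially the paper's: factor $z^6-w^{12}$, absorb one factor into a linear substitution for $x$, and land on a genus-one curve over $\Q(t)$ with a visible section and a point of infinite order (your torsion analysis via the classification for $Y^2=X^3+D$ and the $3$-division polynomial $3X(X^3+4D)$ is correct). The only cosmetic difference is that you extract $z^3-w^6$ and then need the non-invertible substitution $t\mapsto t^2$ to create a rational point, whereas the paper sets $x=-y^2+t^6(z^3+w^6)$, extracting $z^3+w^6$ with the power of $t$ built in, so that the curve $2at^6y^2=(at^{12}-b)z^3+(at^{12}+b)w^6$ carries the point $(z,w,y)=(1,1,t^3)$ from the outset.

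The gap is exactly where you flag it: coprimality, which is the real content of the theorem (infinitely many \emph{not necessarily coprime} polynomial solutions is all that Remark \ref{rem26412} achieves for the neighbouring exponent patterns). You reduce to $\gcd(z_m,w_m)=1$ but then only sketch an argument that $t\nmid\gcd(z_m,w_m)$, which is strictly weaker; and the analogy with Section \ref{sec3} is imperfect, because there the weights of $(y,z,w)$ are $(2,1,1)$ and a common factor of $z,w$ is scaled away immediately, while here the weights are $(3,2,1)$ and removing a common irreducible factor $f$ of $z$ and $w$ requires knowing $f^2\mid z$ and $f^3\mid y$ --- which the paper can guarantee only for $f$ coprime to $t(at^{12}-b)$. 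The paper therefore reduces coprimality to showing that $y$ is coprime to $t(at^{12}-b)$ along an infinite family, and proves this by induction along the subsequence $2^nQ$ using the explicit duplication formulae: one computes $y_n\equiv 64a^2b\,y_{n-1}^4 \pmod{at^{12}-b}$, so $y_n$ stays coprime to $at^{12}-b$, and one tracks the exact $t$-adic valuations ($t^{10}\,\|\,z_n$, $t^{11}\,\|\,w_n$, $t^{12}\,\|\,y_n$ before rescaling by $(t^8,t^4,t^{12})$, restoring $t^2\,\|\,z_n$, $t^7\,\|\,w_n$, $t\nmid y_n$). Your proposal correctly names the bad places $t$, $b-at^4$, $at^4+b$ but does not carry out this bookkeeping, and without it the claim of \emph{coprime} solutions is not established.
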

\begin{proof}
%In order to shorten the proof we will consider slightly more general
%equation of the form
%\begin{equation*}
%a(x^2-y^{2p})=b(z^{2q}-w^{2r}),
%\end{equation*}
%where $p,q,r$ are fixed positive integers.
Set $x=-y^2 + t^6(z^3+w^6)$, where $t$ is an
indeterminate, and get
\begin{equation*}
a(x^2-y^4)-b(z^6-w^{12})=-(z^3+w^6)(2a t^6y^2 - (a t^{12}-b)z^3 - (a t^{12}+b)w^6).
\end{equation*}
The problem reduces to investigating the curve
\begin{equation*}
\cal{C}: \; 2a t^6y^2=(a t^{12}-b)z^3 + (a t^{12}+b)w^6,
\end{equation*}
with Weierstrass form
\[ \cal{E}: \; Y^2 = X^3 + 8a^3(a t^{12}-b)^2(a t^{12}+b) \]
under the mapping
\[ (X,Y)=\left( \frac{2a(a t^{12}-b)z}{w^2}, \; \frac{4a^2t^3(a t^{12}-b)y}{w^3} \right). \]
There is the obvious point $P(z,w,y)=(1,1,t^3)$ mapping to $Q=(2a(a t^{12}-b), -4a^2 t^6(a t^{12}-b))$
on $\cal{E}$. Just as for the point $R$ at (\ref{Rorder}), we can show that $Q$ is of infinite order. The point $2Q$ leads to
the following solution of $a(x^2-y^4)=b(z^6-w^{12})$:
\begin{align*}
& x = -4a^2(729b^4+2430a b^3t^{12}+3024a^2b^2t^{24}+2178a^3b t^{36}-169a^4t^{48}), \\
& y = 2a(-27b^2-18a bt^{12}+13a^2t^{24}), \\
& z = -2a t^2(9b+7a t^{12}),\\
& w = 4a t^7,
\end{align*}
and in general, we construct infinitely many polynomial solutions by pulling
back multiples of $Q$. If $z,w$ are both divisible by an irreducible polynomial $f(t)$
coprime to $t(a t^{12}-b)$, then necessarily $f^2 \mid z$, $f^3 \mid y$, and we have
the polynomial solution $(z/f^2, w/f, y/f^3)$. To give infinitely many polynomial
solutions with coprime $(x,y,z,w)$ it therefore suffices to give an infinite family
where $y$ is coprime to $t(a t^{12}-b)$. Inductively, we claim
that the pull-backs of points $2^nQ$, $n=1,2,3,...$, satisfy this condition.

Let $(z_n,w_n,y_n)=2(z_{n-1},w_{n-1},y_{n-1})$ on $\cal{C}$, with $(z_0,w_0,y_0)=(1,1,t^3)$.
The duplication formula gives
\begin{align*}
%z_n & = 2 a t^2 z_{n-1} (9(a t^{12}-b) z_{n-1}^3 - 16 a t^6 y_{n-1}^2, \\
z_n & =  2 a t^2 z_{n-1} \left( (a t^{12}-b) z_{n-1}^3 - 8 (a t^{12}+b) w_{n-1}^6 \right), \\
w_n & = 4 a t^4 w_{n-1} y_{n-1}, \\
%y_n & = 2 a(32 a^2 t^{12} y_{n-1}^4-72 a(a t^{12}-b)t^6 y_{n-1}^2 z_{n-1}^3+27(a t^{12}-b)^2 z_{n-1}^6).
y_n & = 2 a \left( -(a t^{12}-b)^2 z_{n-1}^6-20 (a t^{12}+b)(a t^{12}-b) z_{n-1}^3 w_{n-1}^6 + 8(a t^{12}+b)^2 w_{n-1}^{12} \right)
\end{align*}
Modulo $a t^{12}-b$, it follows that
\[ y_n \equiv 64 a b^{12} w_{n-1}^4 \equiv 64 a^2 b y_{n-1}^4. \]
% \qquad z_{n-1} \equiv -32 a^2 t^8 z_{n-1} y_{n-1}^2. \]
Since $y_0=t^3$, it immediately follows that $y_n$ is coprime to $a t^{12}-b$ for $n \geq 0$.
Assume now that $t^2 || z_{n-1}$, $t^7 || w_{n-1}$, and $t \nmid y_{n-1}$; this is certainly
true in the case $n=2$ from the above particular solution. The recurrence relations above show that
$t^{10} || z_n$, $t^{11} || w_n$, $t^{12} || y_n$. Replacing $(z,w,y)$ by $(z/t^8, w/t^4, y/t^{12})$
gives a reduced polynomial solution in which $t^2 || z_n$, $t^7 || w_n$, $t \nmid y_n$, as required.
\end{proof}

\begin{rem}\label{rem26412}
{\rm Unfortunately, we are unable to find coprime polynomial
solutions of the equation $a(x^p-y^q)=b(z^r-w^s)$ with
$(p,q,r,s)=(2,6,4,12),(2,12,4,6)$. However, we do construct
polynomial solutions of these equations. First, consider the
equation $a(x^{p}-y^{q})=b(z^{r}-w^{s})$ with
$(p,q,r,s)=(2,6,4,12)$. Set $x=y^3+t^6(z^2+w^6)$ to get
\begin{equation*}
a(x^{2}-y^{6})-b(z^{4}-w^{12})=(z^2+w^6)(2a t^6 y^3 +(a t^{12}-b) z^2 + (a t^{12}+b) w^6),
\end{equation*}
and our problem is reduced to investigation of the curve
\begin{equation*}
\cal{C}: \; (a t^{12}-b)z^2 = -2a t^6 y^3 - (a t^{12}+b) w^6.
\end{equation*}
The curve $\cal{C}$ is birationally equivalent with the Weierstrass cubic:
\begin{equation*}
\cal{E}: \; Y^2=X^3 - 4a^2(a t^{12}-b)^3(a t^{12}+b),
\end{equation*}
under the mapping
\begin{equation*}
(X,Y)=\left( \frac{-2at^2(a t^{12}-b)y}{w^{2}}, \; \frac{2a(a t^{12}-b)^2 z}{w^{3}} \right).
\end{equation*}
There is an obvious point $(y,w,z)=(-t^2,1,1)$ on $\cal{C}$ mapping
to the point
\begin{equation*}
Q=(2a t^4 (a t^{12}-b), 2a(a t^{12}-b)^2),
\end{equation*}
on $\cal{E}$, which as before is of infinite order.
The point $2Q$ leads to the following solution
of the diophantine equation
$a(x^{2}-y^{6})=b(z^{4}-w^{12})$:
\begin{align*}
& x=8 t^6 (a t^{12}-b)^2 (125a^4 t^{48}+409a^3bt^{36}+588a^2b^2t^{24}+256ab^3t^{12}+80b^4), \\
& y=-2 t^2 (a t^{12}-b)(5a t^{12}+4b), \\
& z=-4(a t^{12}-b)(11a^2 t^{24}+14 a b t^{12}+2b^2),\\
& w=2(a t^{12}-b).
\end{align*}
Again, we can write down infinitely many polynomial solutions (not necessarily
coprime).

\bigskip

Second, consider the equation $a(x^2-y^{12})=b(z^4-w^6)$.
Set $x=y^6 - t^6(z^2-w^3)$, where $t$ is an indeterminate, and get
\begin{equation*}
a(x^{2}-y^{12})-b(z^{4}-w^{6})=(w^3-z^2)((a t^{12}+b) w^3 + 2a t^6 y^6 - (a t^{12}-b) z^2),
\end{equation*}
and our problem is reduced to investigating the curve
\begin{equation*}
\cal{C}: \; (a t^{12}-b)z^2=(a t^{12}+b) w^3 + 2a t^6 y^6.
\end{equation*}
The curve $\cal{C}$ is birationally equivalent with the Weierstrass cubic
\begin{equation*}
\cal{E}: \; Y^2 = X^3 + 2a(a t^{12}-b)^3 (a t^{12}+b)^2
\end{equation*}
under the mapping
\begin{equation*}
(X,Y)=\left( \frac{(a t^{12}+b) (a t^{12}-b)w}{t^{2}y^{2}}, \; \frac{(a t^{12}-b)^2 (a t^{12}+b)z}{t^3y^{3}} \right).
\end{equation*}
The obvious point $(w,y,z)=(-1,t,1)$ on $\cal{C}$ maps to
\begin{equation*}
Q=\left( \frac{-(a t^{12}-b)(a t^{12}+b)}{t^4}, \;  \frac{-(a t^{12}-b)^2(a t^{12}+b)}{t^6} \right)
\end{equation*}
on $\cal{E}$, and as before is of infinite order.
The point $2Q$ leads to the following solution
of the diophantine equation $a(x^2-y^{12})=b(z^4-w^6)$:
\begin{align*}
& x = -2t^6(a t^{12}-b)^2 (32a^4 t^{48}+4849a^3 b t^{36}+867a^2 b^2 t^{24}+115a b^3 t^{12}-31b^4), \\
& y = 2t (a t^{12}-b), \\
& z = (a t^{12}-b)(-71a^2 t^{24}-38a b t^{12}+b^2),\\
& w = (a t^{12}-b)(17a t^{12}+b).
\end{align*}
Infinitely many (not necessarily coprime) polynomial solutions are constructed
by taking the pullbacks of $mQ$, $m=2,3,4,\dots$.
}
\end{rem}
As a consequence of Theorem \ref{thm3}, Remark \ref{rem26412}, and the reasoning as in Corollary 3.4,
we have the following.
\begin{cor}
Consider the surfaces
\begin{equation*}
\cal{S}_{3}:\;a(x^2-y^4)=b(z^6-1),\quad \cal{S}_{4}:\;a(x^2-y^6)=b(z^4-1),\quad \cal{S}_{5}:\;a(x^2-1)=b(z^4-w^6)
\end{equation*}
The set of rational points on $\cal{S}_{i}$ is Zariski dense for $i=3,4,5$.
\end{cor}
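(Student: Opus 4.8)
The plan is to deduce all three density statements from the Zariski-density argument already carried out in the proof of Theorem \ref{thm1}. The first step is to realise each $\cal{S}_i$ as a dense affine chart of one of the surfaces treated in Theorem \ref{thm3} and Remark \ref{rem26412}. Indeed, $\cal{S}_3$ is the chart $w=1$ of the surface $a(x^2-y^4)=b(z^6-w^{12})$ of Theorem \ref{thm3}; $\cal{S}_4$ is the chart $w=1$ of $a(x^2-y^6)=b(z^4-w^{12})$; and $\cal{S}_5$ is the chart $y=1$ of $a(x^2-y^{12})=b(z^4-w^6)$, the last two being the surfaces of Remark \ref{rem26412}. Viewing these equations in the weighted projective spaces $\mathbb{P}(6,3,2,1)$, $\mathbb{P}(6,2,3,1)$ and $\mathbb{P}(6,1,3,2)$ respectively, the distinguished variable has weight $1$ in each case, so the locus where it is nonzero is a standard, dense open chart of an irreducible two-dimensional surface $S$. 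Consequently Zariski density of rational points on each $\cal{S}_i$ is equivalent to Zariski density on the corresponding full surface $S$.

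Next I would invoke the elliptic fibrations already constructed. For each of the three surfaces, Theorem \ref{thm3} / Remark \ref{rem26412} produces a birational model over $\Q(t)$ given by an elliptic curve $\cal{E}:\;Y^2=X^3+D(t)$ together with an explicit rational point $Q$, and --- by the characterisation of torsion on curves $Y^2=X^3+D$ used in the proof of Theorem \ref{thm1} --- shows $Q$ to be of infinite order. Thus each $\cal{E}$ has positive rank over $\Q(t)$, so the multiples $mQ=(X_m(t),Y_m(t))$ for $m=1,2,3,\dots$ give infinitely many distinct $\Q(t)$-rational points, equivalently infinitely many rational curves on the total space of $\cal{E}$.

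The core of the argument is then a verbatim repetition of the Zariski-density reasoning in Theorem \ref{thm1}. Viewing $\cal{E}$ as an elliptic surface in $(X,Y,t)$-space, each $mQ$ sweeps out a rational curve $(X_m(t),Y_m(t),t)$ lying in the Zariski closure $\cal{R}$ of $\cal{E}(\Q)$. Since $\cal{R}$ has only finitely many irreducible components yet contains infinitely many distinct such curves, $\cal{R}$ has dimension two; as $\cal{E}$ is irreducible, $\cal{R}=\cal{E}$, so $\cal{E}(\Q)$ is Zariski dense. Transporting this density back along the $\Q$-defined birational equivalence $\cal{E}\dashrightarrow\cal{C}$ (fibrewise over $t$) and then along the dominant map $\cal{C}\to S$ --- dominant because a generic point of $S$ recovers $t$ from the defining substitution --- yields Zariski density on $S$, and hence on the dense open chart $\cal{S}_i$.

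I expect no serious obstacle here: the one genuinely arithmetic ingredient, namely that each distinguished point $Q$ has infinite order and so forces positive rank, is precisely what Theorem \ref{thm3} and Remark \ref{rem26412} already supply, after which the density conclusion is purely formal. The only matters requiring care are bookkeeping ones --- confirming that each $S$ is irreducible, that the correspondences $\cal{E}\dashrightarrow\cal{C}\to S$ are dominant and defined over $\Q$ so that density really transfers, and that passing to the affine chart loses no density (which it does not, the chart being dense open in an irreducible surface). These verifications are routine, so the substance of the proof rests entirely on the infinite-order claims established earlier.
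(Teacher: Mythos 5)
Your proposal is correct and follows essentially the same route as the paper: the authors likewise deduce the corollary from the infinitely many rational curves produced by the elliptic fibrations of Theorem \ref{thm3} and Remark \ref{rem26412}, combined with the finitely-many-components Zariski-closure argument already given in the proof of Theorem \ref{thm1}. Your write-up merely makes explicit the chart identifications and the dominance of the maps, which the paper leaves implicit.
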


\section{The equation $a(x^4-y^4)=b(z^4-w^4)$}\label{sec5}
We take the equation in the form
\[ V: \; x^4-y^4=h(z^4-w^4), \]
representing a surface in projective three-space.
Choudhry~\cite{Ch} has some elementary results showing how to derive new points from
known points, essentially arising from elliptic fibrations of the surface of type
\[ x^2-y^2 = t(z^2-w^2), \quad t(x^2+y^2) = h(z^2+w^2), \]
where the known point means the intersection of the two quadrics is an elliptic curve.\\ \\
We ask whether there exist parametrizable curves on $V$, that is, curves of geometric
genus 0. We are only able to treat the case of curves that have arithmetic genus 0,
hence geometric genus 0, and hence parametrizable.
%We cannot say anything about curves on $V$ of geometric genus 0 (hence parametrizable), but which have arithmetic genus greater than 0.
The arguments we use are those of Swinnerton-Dyer~\cite{SwD}. \\ \\
It is known (see for example Pinch \& Swinnerton-Dyer~\cite{P-SD}) that there are precisely
48 straight lines on the surface $V$, given as follows:
\begin{align*}
& (x=\alpha_1 y, z=\beta_1 w), \quad  \alpha_1^4=\beta_1^4=1,  \\
& (x=\alpha_2 z, y=\beta_2 w), \quad  \alpha_2^4=\beta_2^4=h,  \\
& (x=\alpha_3 w, y=\beta_3 z), \quad  \alpha_3^4=\beta_1^4=-h;
\end{align*}
further, the  N\'eron-Severi group of $V$ over $\mathbb{C}$ is generated by the classes
of these lines. \\
Write $h=\theta^4$.  The hyperplane $x-y-\theta(z-w)=0$ cuts the surface $V$ in the lines
$(x=y,z=w)$, $(x=\theta z, y=\theta w)$, with residual intersection the irreducible conic
\[ x^2 - x y + 2 y^2 - \theta x z + 3 \theta y z + 2 \theta^2 z^2=0. \]
In this way we generate 128 distinct irreducible conics on $V$, typified by the above. \\ \\
In the first instance, we suppose $h$ is not a perfect rational square. Pinch \& Swinnerton-Dyer
show that the N\'eron-Severi group of $V$ over $\mathbb{Q}$ is of rank 6. It is
straightforward to show that the group has a $\Z$-basis given by (the classes of) the following four
lines and two line pairs:
\begin{align*}
\Delta_1=(x=y,z=w), \qquad  & \Delta_2=(x=y,z=-w), \\
\Delta_3=(x=-y,z=w), \qquad & \Delta_4=(x=-y,z=-w), \\
\Delta_5=(x=y,z=i w)+(x=y,z=-i w), \quad & \Delta_6=(x=i y,z=i w)+(x=-i y,z=-i w).
\end{align*}
The corresponding intersection matrix is given in Table 1.  \\
\begin{table}[h]
\caption{Intersection matrix for a $\Z$-basis of $V$ with $h$ not a square}
\begin{center}
$\begin{array}{c||c|c|c|c|c|c|}
 & \Delta_1 & \Delta_2 & \Delta_3 & \Delta_4 & \Delta_5 & \Delta_6 \\ \hline \hline
\Delta_1 &-2 & 1 & 1 & 0 & 2 & 0 \\ \hline
\Delta_2 & 1 & -2 & 0 & 1 & 2 & 0 \\ \hline
\Delta_3 & 1 & 0 & -2 & 1 & 0 & 0 \\ \hline
\Delta_4 & 0 & 1 & 1 & -2 & 0 & 0 \\ \hline
\Delta_5 & 2 & 2 & 0 & 0 & -2 & 2 \\ \hline
\Delta_6 & 0 & 0 & 0 & 0 & 2 & -4 \\ \hline
\end{array}$
\end{center}
\end{table}
For a curve $\Gamma$ defined over $\mathbb{Q}$, set
\[ \Ga \sim n_1 \Delta_1+n_2 \Delta_2+n_3 \Delta_3+n_4 \Delta_4+n_5 \Delta_5+n_6 \Delta_6. \]
Defining $g(\Gamma)$ to be the arithmetic genus of $\Gamma$, we have
\begin{align*}
2 \; g(\Ga) - 2 = (\Ga \cdot \Ga) = & -2 n_1^2 + 2 n_1 n_2 + 2 n_1 n_3 + 4 n_1 n_5 - 2 n_2^2 + 2 n_2 n_4 + 4 n_2 n_5 \\
& - 2 n_3^2 + 2 n_3 n_4 - 2 n_4^2 - 2 n_5^2 + 4 n_5 n_6 - 4 n_6^2.
\end{align*}
Writing
\[ \mbox{deg}(\Ga) = d = n_1+n_2+n_3+n_4+2n_5+2n_6, \]
we have
\begin{align*}
d^2-4(\Ga.\Ga) = & (-d + 4 n_5)^2 + 4 (-d + n_2 + 3 n_5 + n_3 + 2 n_4 + 2 n_6)^2\\
& +4 (-d + 2 n_2 + 2 n_5 + 2 n_3 + 2 n_6)^2 + 4 (n_2-n_5-n_3)^2 + 16 n_6^2,
\end{align*}
allowing efficient computation of divisors over ${\Q}$ of given degree
and genus. \\ \\
Suppose that $\Gamma$ is irreducible and is distinct from any of the
known lines or conics. Then $\Gamma$ will have non-negative intersection
number with all the straight lines and conics, and this determines linear inequalities
on the coefficients $n_i$, $i=1,...,6$. In this way, we obtain 31 linear constraints
to be satisfied by the $n_i$, and accordingly $(n_1,n_2,n_3,n_4,n_5,n_6)$ is a point
of the corresponding convex cone. Supporting hyperplanes of this cone
are $22$ in number (we used the routine {\tt MinimalInequalities} in Magma~\cite{Mag}).
Now
\[ g(\Ga) - 1 = \frac{1}{2} (\Ga \cdot \Ga) = \frac{1}{8} d^2 - (\mbox{positive definite form in the $n_i$}), \]
and we shall see that $\frac{1}{8} d^2 - (\mbox{pos. def. form in the $n_i$})$ is
non-negative on this reduced cone, forcing $g(\Ga) \geq 1$.
On the hyperplane $d=\mbox{const}$, this latter form takes its minimum at a vertex of the
resulting convex polytope, and so it is enough to see that the form, equivalently,
$(\Ga \cdot \Ga)$, is non-negative on the extremal rays of the cone (that is, the
lines joining the origin to the vertices of the convex polytope in which any hyperplane
meets the cone). In this particular instance, there are 41 such rays, and the minimum
value taken by $(\Ga \cdot \Ga)$ is 0.  It follows that there are no curves $\Gamma$
on $V$ of (arithmetic) genus 0 other than lines and conics.  \\ \\
Suppose second that $h$ is a perfect rational square, but not equal to $1$ or $4$ modulo fourth powers.
In this case the N\'eron-Severi group of $V$ over $\mathbb{Q}$ is of rank 7, with basis
as above together with the extra divisor
\[ \Delta_7 = (x=\theta z, y=\theta w) + (x=-\theta z, y=-\theta w). \]
Arguing as before, an irreducible curve
%$\Gamma \sim \sum_{i=1}^{i=7} n_i \Delta_i$,
$\displaystyle\sum\limits_{i=1}^7 n_i \Delta_i$,
distinct from the straight lines and known conics, determines a point $(n_1,...,n_7)$
that lies within a polytope defined by the 49 half-planes arising from demanding
non-negative intersection of $\Gamma$ with the known lines and conics.
The reduced cone is defined by 30 half-planes, with 113 extremal rays. Again the
minimum value of $(\Ga \cdot \Ga)$ on the extremal rays is equal to 0, and it follows
that there are no rational curves $\Gamma$ on $V$ of (arithmetic) genus 0 other than lines
and conics. \\ \\
Third, if $h$ is $4$ modulo fourth powers, then without loss of generality, $h=4$.
This surface appears in the literature (Choudhry\cite{Ch})), who presents parameterizations
of degrees 3 and 13, but a full treatment seems not to have been given. The surface
has N\'eron-Severi rank 9, with basis as above, together with the two additional line pairs
\begin{align*}
\Delta_8 & = (x=(1+i)w, y=(1+i)z) + (x=(1-i)w, y=(1-i)z), \\
\Delta_9 & = (x=(1+i)w, y=-(1+i)z) + (x=(1-i)w, y=-(1-i)z).
\end{align*}
Using similar analysis as before, we can show that there is up to symmetry a unique curve
of degree 3, parameterized by:
\[ x:y:z:w = 4-2t+4t^2+t^3 \; : \; 4+2t+4t^2-t^3 \; : \; 2-4t-t^2-t^3 \; : \; 2+4t-t^2+t^3. \]
There are no curves of degree 5, and up to symmetry a unique curve of degree 7:
\begin{align*}
x:y:z:w = & -64 - 24t - 24t^2 + 132t^3 - 144t^4 + 138t^5 - 22t^6 + 9t^7: \\
& -96 + 8t - 24t^2 - 252t^3 + 168t^4 - 78t^5 + 42t^6 - 7t^7: \\
& -56 + 168t - 156t^2 + 168t^3 - 126t^4 - 6t^5 + t^6 - 6t^7: \\
& -72 + 88t - 276t^2 + 144t^3 - 66t^4 + 6t^5 + 3t^6 + 4t^7.
\end{align*}
It seems plausible that there are no rational curves of even degree, and curves of every
odd degree at least 7: (with some effort) we have written down such for all odd degrees up
to 25.
% for degree 17, observe that {1,0,0,0,0,0,0,0,0}.a2.a1.s[[12]].a1
% is of degree 17
The techniques of Swinnerton-Dyer should resolve this question, in that all curves
should be realisable by repeated application to the straight lines of a finite set
of automorphisms of the surface. We have not taken this further here.  \\ \\
Finally, if $h$ is a perfect fourth power, then without loss of generality, $h=1$.
The surface was known by Euler to contain parametrizable curves, and has been fully treated by Swinnerton-Dyer~\cite{SwD}

\section{The equation $a(y_{1}^4-f_{1}(X)^{2})=b(y_{2}^4-f_{2}(X)^2)$ }\label{sec6}

The same ideas as used in the preceding sections allow treatment of slightly
more general equations.  In a recent paper \cite{Ul}, {\it inter alia}, it is proved that
for any pair of non-zero integers $a,b$ and any positive odd $n$ the
diophantine equation
\begin{equation}\label{prev}
a(y_{1}^{4}-x_{1}^{2n})=b(y_{2}^{4}-x_{2}^{2n}),
\end{equation}
has infinitely many rational parametric solutions. In order to get
this result the variety defined by the equation
(\ref{prev}) is treated as an intersection of two rational hypersurfaces
defined over the field $\Q(t)$. Using a suitable
(non-invertible) change of variables the study of the intersection
is reduced to the problem of constructing
$\Q(t)$-rational points on a certain hyperelliptic quartic curve, say
$\cal{C}$. An important feature of this construction is that the
curve $\cal{C}$ has a $\Q(t)$-rational point at infinity. Thus,
essentially we can transform $\cal{C}$ into an elliptic curve, say
$\cal{E}$, with Weierstrass equation defined over the field $\Q(t)$.
The existence of another $\Q(t)$-point on $\cal{C}$ guarantees that $\cal{E}$
has positive $\Q(t)$-rank, implying that the set of
$\Q(t)$-rational points on $\cal{E}$, and thus on $\cal{C}$, is
infinite. From this it follows that the set of parametric solutions
of (\ref{prev}) is infinite. This method is very simple and in
essence is very similar to the method used in previous
sections here. A natural question arises as to whether we can use a corresponding
approach to other diophantine equations of similar nature.

In this section we are interested in the following generalization
of a result related to the solvability of the diophantine equation
(\ref{prev}).

\begin{thm}\label{thmgen2}
Let $a,b$ be non-zero integers and let $f_{1}(\overline{X})$,
$f_{2}(\overline{X})$ be homogenous forms with integer
coefficients, where $\overline{X}=(X_{1},X_{2},\ldots,X_{n})$ is
a vector of variables. Suppose that $\op{deg}f_{1}=\op{deg}f_{2}=2m+1$,
$m \in \Z$. Moreover, suppose that the set of rational
points on the variety
\begin{equation*}
\cal{H}:\;Y^2=-f_{1}(\overline{X})f_{2}(\overline{X})
\end{equation*}
is infinite. Then there are infinitely many rational points lying on the hypersurface
defined by the equation
\begin{equation}\label{eq3}
\cal{V}:\;a(y_{1}^4-f_{1}(\overline{X})^{2})=b(y_{2}^4-f_{2}(\overline{X})^2).
\end{equation}
If $\cal{H}$ contains rational curves, then so does $\cal{V}$.
\end{thm}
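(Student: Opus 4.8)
The plan is to imitate the reductions of the earlier sections, turning $\cal V$ into a single hyperelliptic quartic carrying a rational point at infinity. Write $A=f_1(\overline X)$, $B=f_2(\overline X)$, so that a rational point of $\cal H$ supplies rational values with $AB=-Y^2$. Rearranging $\cal V$ as $ay_1^4-by_2^4=aA^2-bB^2$, I would view $(Y_1,Y_2)=(y_1^2,y_2^2)$ as a point on the conic $aY_1^2-bY_2^2=aA^2-bB^2$, which carries the evident rational point $(A,B)$. Parametrising by the pencil of lines $Y_1=A+ts$, $Y_2=B+s$ through this point gives
\[
Y_1=\frac{G_1(t)}{at^2-b},\qquad Y_2=\frac{G_2(t)}{at^2-b},\qquad G_1=-aAt^2+2bBt-bA,\quad G_2=aBt^2-2aAt+bB.
\]
A point of $\cal V$ then demands that $Y_1$ \emph{and} $Y_2$ be simultaneously squares, which is two conditions for the one free parameter $t$.

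The device that collapses these to a single condition is the oddness of $\deg f_1=\deg f_2=2m+1$. Since $f_i(\lambda\overline X)=\lambda^{2m+1}f_i(\overline X)$ and $\lambda^{2m+1}\equiv\lambda\pmod{\Box}$, a rescaling $\overline X\mapsto\lambda\overline X$ can absorb one square class. Concretely, for $t\in\Q$ with $at^2-b\neq0$ and $G_1(t)\neq0$, I would set $\lambda=Y_1=G_1(t)/(at^2-b)$ and $\overline X'=\lambda\overline X$; then $y_1=\lambda^{m+1}$ realises the square $y_1^2=\lambda^{2m+1}Y_1=\lambda^{2m+2}$, while the surviving requirement $y_2^2=\lambda^{2m+1}Y_2=\lambda^{2m}Y_1Y_2$ is a square \emph{exactly} when $Y_1Y_2=\Box$, i.e. when $G_1(t)G_2(t)=\Box$. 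Feeding in $AB=-Y^2$, I compute
\begin{equation*}
G_1(t)G_2(t)=a^2Y^2t^4+2a(aA^2+bB^2)t^3+6abY^2t^2+2b(aA^2+bB^2)t+b^2Y^2,
\end{equation*}
a quartic whose leading coefficient $(aY)^2$ and constant term $(bY)^2$ are both squares. This is the promised curve $\cal C:\;v^2=G_1(t)G_2(t)$, and the square leading coefficient supplies its rational point at infinity. One checks directly that each rational $t$ with $v^2=G_1(t)G_2(t)$ yields, by the recipe above, a genuine rational point $(y_1,y_2,\overline X')$ of $\cal V$, since $a\,Y_1^2-b\,Y_2^2=aA^2-bB^2$ holds along the parametrised conic.

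It remains to produce infinitely many admissible $t$. Taking a point at infinity as origin, $\cal C$ is birational to an elliptic curve $\cal E/\Q$, and the square constant term yields the further rational point $(t,v)=(0,bY)$ on $\cal C$. The hard part will be showing that $\cal E$ has positive rank: the values $t=0$ and $t=\infty$ correspond precisely to the degenerate solutions $y_i^4=f_i^2$ that we discard, so the mere presence of rational points is not enough, and one must exhibit a point of infinite order. I would settle this exactly as in the earlier sections, transferring $\cal C$ to a short Weierstrass model and ruling out finite order by a torsion characterisation in the spirit of Lemma \ref{lem1} and Theorem \ref{thm1}; once $\cal E$ has positive rank, its multiples furnish infinitely many admissible $t$, hence infinitely many rational points on $\cal V$ (the parameter $\lambda$, and with it $\overline X'$, varying so as to give distinct points). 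Finally, for the second assertion I would run the whole construction over the function field $\Q(u)$ of a rational curve on $\cal H$: then $A,B,Y\in\Q(u)$, the curve $\cal C$ and the point $(0,bY)$ are defined over $\Q(u)$, and a non-torsion $\Q(u)$-point of $\cal E$ — whose existence again reduces to the torsion analysis, or follows from Silverman's specialisation theorem applied to the family — gives a one-parameter family of solutions, that is, a rational curve on $\cal V$.
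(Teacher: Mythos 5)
Your reduction is correct as far as it goes, and it is a genuinely different route from the paper's: you parametrise the conic $aY_1^2-bY_2^2=aA^2-bB^2$ through $(A,B)$ and use the odd degree of the $f_i$ to absorb one of the two square conditions by the rescaling $\overline{X}\mapsto\lambda\overline{X}$, landing on the single quartic $v^2=G_1(t)G_2(t)$ (whose coefficients I have checked). The paper instead splits the equation into the $U$-parametrised intersection $a(y_1^2-f_1)=bU(y_2^2-f_2)$, $U(y_1^2+f_1)=y_2^2+f_2$, uses the scaling $X_i=u_iT$, $y_1=T^m$, $y_2=vT^m$ to solve for $T$ linearly, and arrives at the quartic $V^2=(btU^2+2aU+at)(bU^2+2btU+a)$ with $t=-f_2/f_1$ a square by the hypothesis on $\cal{H}$.

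The genuine gap is the step you yourself flag as ``the hard part'' and then defer: you never exhibit a candidate point of infinite order. The only rational points you produce on $\cal{C}$ are the two at infinity and $(0,\pm bY)$, and $t=0,\infty$ are precisely the values you identify as giving the degenerate solutions $y_i^4=f_i^2$; no argument is offered that any $\Z$-combination of these four points is non-torsion, and a priori they could generate a finite subgroup, in which case the construction yields nothing new. Your proposed fix --- ``a torsion characterisation in the spirit of Lemma \ref{lem1} and Theorem \ref{thm1}'' --- does not transfer: those arguments rest on the Weierstrass model having the special shapes $Y^2=X(X^2+D)$ or $Y^2=X^3+D$, for which torsion is classified by simple conditions on $D$, whereas the Jacobian of $v^2=G_1(t)G_2(t)$ (a product of two non-proportional quadratics) is only of the form $Y^2=X(X^2+\alpha X+\beta)$ with $\alpha\neq0$ in general, and Mazur's theorem alone cannot rule out that your four points are torsion. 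Appealing to Silverman's specialisation theorem is circular here, since it presupposes a non-torsion point on the generic fibre. This is exactly what the paper's different reduction buys: its model is $Y^2=X^3+4ab(a-bt^4)^2X$, literally of the form $X(X^2+D)$, together with an explicitly computed point $Q$ with $XY\neq0$, so infinite order follows at once from the classification after checking $D\neq4$ and $D\neq-\Box$. To complete your argument you would need to compute the Jacobian of your quartic and exhibit on it an explicit point provably of infinite order (or otherwise control its torsion); as written, the proof is incomplete at its crux.
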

\begin{proof}
Without loss of generality we can assume that $\op{gcd}(a,b)=1$.
Instead of considering $\cal{V}$, consider the variety defined by the intersection
\begin{equation}\label{sys2}
a(y_{1}^2-f_{1}(\overline{X}))=b U(y_{2}^2-f_{2}(\overline{X})),\quad
U(y_{1}^2+f_{1}(\overline{X}))=y_{2}^2+f_{2}(\overline{X}),
\end{equation}
where $U$ is an indeterminate parameter. In order to solve the above system, take
\begin{equation}\label{sub2}
y_{1}=T^{m},\quad y_{2}=vT^{m},\quad X_{i}=u_{i}T \quad
\mbox{for}\quad\;i=1,2,\ldots,n.
\end{equation}
From the first equation in (\ref{sys2}) we get (after clearing
the common factor $T^{2m}$)
\begin{equation}\label{expressionforT}
T=\frac{a-b U v^2}{a f_{1}-b f_{2} U},
\end{equation}
where to shorten the notation put
$f_{i}=f_{i}(\overline{u})$, with
$\overline{u}=(u_{1},\ldots,u_{n})$ and $i=1,2$. From the second
equation in (\ref{sys2}):
\begin{equation*}
(b f_{1} U^2-2b f_{2}U+a f_{1})v^2=-b f_{2} U^2+2 a f_{1} U-a f_{2}.
\end{equation*}
Thus we get the equation of a hyperelliptic quartic curve in the form
\begin{equation*}
\cal{C}_{a,b}:\;V^2=(b t U^2+2a U+a t)(b U^2+2b t U+a),
\end{equation*}
where $V=v(b f_{1} U^2-2b f_{2} U+a f_{1})/f_{1}$ and
$t=t(\overline{u})=-f_{2}(\overline{u})/f_{1}(\overline{u})$. From
the assumption which says that the variety
$Y^2=-f_{1}(\overline{u})f_{2}(\overline{u})$ has infinitely many
rational points we know that for infinitely many $n$-tuples
$\overline{u}$ of rational numbers the value of $t(\overline{u})$ is
a square. We can treat the curve $\cal{C}_{a,b}$ as the curve
defined over the function field $\Q(\cal{H})$ of the variety
$\cal{H}$.

Note that when $t(\overline{u})$ is square, then the curve $\cal{C}_{a,b}$ possesses the
$\Q(\cal{H})$-rational point $P=(0,a \sqrt{t(\overline{u})})$. Taking the point $P$ as a
point at infinity on the curve $\cal{C}_{a,b}$, then $\cal{C}_{a,b}$ is
birationally equivalent to the elliptic curve with Weierstrass
equation
\begin{equation*}
\cal{E}_{a,b}:\;Y^2=X^3+4ab(a-bt^4)^2X.
\end{equation*}
%The mapping from $\cal{E}_{a,b}$ to $\cal{C}_{a,b}$ is given by
%$\phi(X,Y)=(U,V)$, where
%\begin{align*}
%&U=\frac{2b(a^2-2abt^4+b^2t^8-t^2X)}{(a+bt^4)X-tY},\\
%&V=\frac{b(2(a-bt^4)^2(a+bt^4)+t^3X^3-t(a-bt^4)^2(3X^2+4abt^2X+4ab(a-bt^4)^2))}{((a+bt^4)X-tY)^2}.
%\end{align*}
The curve $\cal{E}_{a,b}$ contains the rational point
$Q=(X,Y)$, where
\begin{align*}
&X=\frac{(a^2-6abt^4+b^2t^8)^2}{4t^2(a+bt^4)^2},\\
&Y=\frac{(a^2-6abt^4+b^2t^8)(a^4+20a^3bt^4-26a^2b^2t^8+20ab^3t^{12}+b^4t^{16})}{8t^3(a+bt^4)^3}.
\end{align*}
As before, it is easy to see that $Q$ is of infinite order in the
group $\cal{E}_{a,b}(\Q(\cal{H}))$. Indeed, from the infinitude of
the rational points on the variety $\cal{H}$ one can find a
$n$-tuple $\overline{u}$ such that $t$ is finite and non-zero.
Moreover the point $Q$ is non-trivial, i.e., $XY\neq 0$, and
$4ab(a-bt^4)^2\neq 4$ and it is not a fourth power. This implies
that the corresponding $Q$ is of infinite order. Summing up, we see that for
any $k\in\N_{+}$ the point $kQ=(X_{k},Y_{k})$ allows the computation
of values of $v$ and $T$ and thus leads to a rational point on the
hypersurface $\cal{V}$ from the expressions (\ref{sub2}).
Because $Q$ is of infinite order we get infinitely many rational
points on $\cal{V}$. The argument is analogous for when $\cal{H}$ contains
rational curves. This finishes the proof of the theorem.
\end{proof}

We note an interesting result that follows from this theorem.

\begin{cor}
Consider $n$-forms
$f_{i}(\overline{X})=L_{i}(\overline{X})F_{i}(\overline{X})^{2}$,
where $L_{i}$ is a linear $n$-form for $i=1,2$ and $L_{1}, L_{2}$
are independent. Then the hypersurface $\cal{V}$ given by the equation (\ref{eq3})
has infinitely many rational parametric solutions depending on $n-1$ variables.
\end{cor}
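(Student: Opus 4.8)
The plan is to apply Theorem~\ref{thmgen2} directly, so the task reduces to verifying its hypothesis: that the variety $\cal{H}:\;Y^2=-f_1(\overline{X})f_2(\overline{X})$ has infinitely many rational points, and indeed contains rational curves. With the special shape $f_i=L_iF_i^2$, we have
\[ -f_1 f_2 = -L_1 L_2 F_1^2 F_2^2, \]
and so $\cal{H}$ becomes $Y^2 = -L_1 L_2 (F_1 F_2)^2$. Setting $Y = (F_1 F_2)\,Z$ (and working on the open set where $F_1 F_2\neq 0$), this is equivalent to the conic-like equation
\[ Z^2 = -L_1(\overline{X})\,L_2(\overline{X}). \]
Thus the whole problem collapses to exhibiting infinitely many rational points on this last hypersurface, which is governed by a product of two independent linear forms.

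\medskip

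\noindent Since $L_1, L_2$ are independent linear $n$-forms, the map $\overline{X}\mapsto(L_1(\overline{X}),L_2(\overline{X}))$ is a surjective linear map onto the $(\ell_1,\ell_2)$-plane. First I would parametrize a positive-dimensional family in that plane on which $-\ell_1\ell_2$ is automatically a perfect square. The cleanest choice is to set $\ell_1 = s$ and $\ell_2 = -s$, giving $-\ell_1\ell_2 = s^2$, hence $Z = s$; but this risks meeting the degenerate locus. A more robust choice is the standard rational parametrization of the conic $Z^2+\ell_1\ell_2=0$: for a free parameter $\lambda$ put
\[ \ell_1 = \lambda^2,\qquad \ell_2 = -\mu^2,\qquad Z=\lambda\mu, \]
or more generally pull back any rational point of the conic under the independence-guaranteed linear surjection. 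Because $L_1,L_2$ are independent, the preimage of each such plane point under $(L_1,L_2)$ is an affine subspace of dimension $n-2$ in $\overline{X}$, and $\lambda,\mu$ (equivalently the conic parameter) supply one further dimension. Solving the linear system $L_1(\overline{X})=\ell_1$, $L_2(\overline{X})=\ell_2$ produces $\overline{X}$ as a linear (hence rational) function of the remaining $n-1$ free parameters, and $Z$ is rational in them as well. This yields an $(n-1)$-parameter family of rational points on $\cal{H}$, so in particular $\cal{H}(\Q)$ is infinite and $\cal{H}$ contains rational curves.

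\medskip

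\noindent With the hypothesis verified, Theorem~\ref{thmgen2} applies verbatim: it guarantees infinitely many rational points on $\cal{V}$, and moreover produces rational curves on $\cal{V}$ whenever $\cal{H}$ does. Tracking the parameter count through the construction in the proof of Theorem~\ref{thmgen2}, the $(n-1)$-dimensional family on $\cal{H}$ feeds through the substitution (\ref{sub2}) and the infinite-order point $Q$ on $\cal{E}_{a,b}$ to give an $(n-1)$-parameter family of rational solutions on $\cal{V}$, which is the asserted parametric count.

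\medskip

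\noindent \textbf{Main obstacle.} The one genuine subtlety is \emph{degeneration}: the construction in Theorem~\ref{thmgen2} requires that the specialized value $t=-f_2/f_1$ be finite, nonzero, and such that the quantity $4ab(a-bt^4)^2$ is neither $4$ nor a fourth power, so that $Q$ remains of infinite order. With $f_i=L_iF_i^2$ one has $t=-L_2F_2^2/(L_1F_1^2)$, and I must check that my $(n-1)$-parameter family does not force $t$ into the finitely many bad values, nor make $L_1F_1^2$ vanish identically. Because the family is $(n-1)$-dimensional while the excluded conditions cut out proper (lower-dimensional) subvarieties of parameter space, a generic member of the family avoids all of them; the independence of $L_1,L_2$ is exactly what prevents $t$ from being forced constant. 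Verifying this genericity explicitly is the only step requiring care, and it is routine once the family is written down.
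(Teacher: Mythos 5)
Your proposal is correct and follows essentially the same route as the paper: reduce $\cal{H}$ to $Z^2=-L_1(\overline{X})L_2(\overline{X})$ after factoring out $(F_1F_2)^2$, parametrize by setting $L_1=\lambda^2$, $L_2=-\mu^2$ (solvable by independence of the linear forms), and feed the resulting $(n-1)$-parameter rational family into Theorem~\ref{thmgen2}. The paper's proof is exactly this two-line parametrization, without the additional genericity discussion you include.
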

\begin{proof}
It is clear that the result will follow if we
show that the hypersurface $\cal{H}$ given in the preceding theorem
contains a rational hypersurface parameterized by rational functions
depending on $n-1$ parameters. In our situation $\cal{H}$ takes the
form
\begin{equation*}
\cal{H}:\;Y^2=-L_{1}(\overline{X})L_{2}(\overline{X})(F_{1}(\overline{X})F_{2}(\overline{X}))^2.
\end{equation*}
$\cal{H}$ is clearly rational. Indeed, a parameterization can be
obtained easily by solving the system
$L_{1}(\overline{X})=U_{1}^2,\;L_{2}(\overline{X})=-U_{2}^2$
(which clearly has solution by independence of $L_1$ and $L_2$) and putting
$Y=U_{1}U_{2}F_{1}(\overline{X})F_{2}(\overline{X})$.
\end{proof}

We also prove the following result of independent interest.

\begin{thm}\label{thmgen1}
Let $a,b$ be non-zero fixed integers, $n\in\N_{+}$ and put $\overline{X}=(x_{1},\ldots,x_{n})$.
Let $f_{1}, f_{2}$, be homogenous forms with integer coefficients
and suppose that $\op{deg}f_{2}=2\op{deg}f_{1}+1$. Then the set of
rational points on the hypersurface defined by the equation
\begin{equation*}\label{eq1}
\cal{W}:\;a(y_{1}^4-f_{1}(\overline{X})^{4})=b(y_{2}^4-f_{2}(\overline{X})^2)
\end{equation*}
is dense in the Zariski topology. Moreover, there are infinitely many rational curves
which lie on $\cal{W}$.
\end{thm}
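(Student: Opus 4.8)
The plan is to adapt the intersection technique of Theorem~\ref{thmgen2}, the crucial new feature being that the degree condition makes the fibres of the relevant auxiliary surface solvable for free, so that no Diophantine hypothesis (like the one imposed on $\cal{H}$ in Theorem~\ref{thmgen2}) is required. Write $d=\op{deg}f_{1}$, so that $\op{deg}f_{2}=2d+1=\op{deg}(f_{1}^{2})+1$, and use $f_{1}^{4}=(f_{1}^{2})^{2}$. Introducing an auxiliary coordinate $U$, I realise $\cal{W}$ birationally through the intersection of
\[ a(y_{1}^{2}-f_{1}^{2})=bU(y_{2}^{2}-f_{2}),\qquad U(y_{1}^{2}+f_{1}^{2})=y_{2}^{2}+f_{2}, \]
whose two equations multiply (after cancelling $U$) to the equation of $\cal{W}$. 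Now I exploit homogeneity: fix a rational direction $\overline{u}$ with $F_{1}:=f_{1}(\overline{u})\neq 0\neq f_{2}(\overline{u})=:F_{2}$, set $X_{i}=u_{i}T$, and put $y_{1}=T^{d}\eta_{1}$, $y_{2}=T^{d}\eta_{2}$. Here the degree hypothesis does the essential work: because $\op{deg}f_{2}=2d+1$ exceeds $\op{deg}(f_{1}^{2})=2d$ by exactly one, cancelling the common factor $T^{2d}$ leaves $f_{2}$ contributing the \emph{single} power $TF_{2}$, and the two relations become
\[ a(\eta_{1}^{2}-F_{1}^{2})=bU(\eta_{2}^{2}-TF_{2}),\qquad U(\eta_{1}^{2}+F_{1}^{2})=\eta_{2}^{2}+TF_{2}. \]
In particular $T=\big(U(\eta_{1}^{2}+F_{1}^{2})-\eta_{2}^{2}\big)/F_{2}$ is recovered \emph{rationally} (this is exactly where the hypothesis $\op{deg}f_{2}=2\op{deg}f_{1}+1$ is used --- any other degree would force an unwanted radical), and eliminating $T$ leaves the surface
\[ \cal{S}:\quad 2bU\eta_{2}^{2}=(bU^{2}+a)\eta_{1}^{2}+(bU^{2}-a)F_{1}^{2}. \]

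The point of the construction is that $\cal{S}$ is a conic bundle over the $U$--line: for each fixed $U$ it is a conic in $(\eta_{1},\eta_{2})$. Specialising $U=s^{2}$ produces the rational point $(\eta_{1},\eta_{2})=(F_{1},sF_{1})$ on the fibre $\cal{Q}_{s}$, so $\cal{Q}_{s}$ is a smooth conic carrying a rational point and is therefore rational; parametrising it by the pencil of lines through $(F_{1},sF_{1})$ gives a one--parameter family $(\eta_{1}(m),\eta_{2}(m))$ of rational points. Feeding these back through $T$, $y_{1}=T^{d}\eta_{1}$, $y_{2}=T^{d}\eta_{2}$, $X_{i}=u_{i}T$ yields an honest rational curve on $\cal{W}$, with $y_{1}$ and $y_{2}$ both rational (so that, unlike in a naive elimination, no ``the product is a square'' obstruction arises); and for generic $m$ one has $\eta_{1}\neq F_{1}$, i.e.\ $y_{1}\neq f_{1}(\overline{X})$, so the curve is none of the trivial lines. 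Letting $s$ and $\overline{u}$ range over rational values then yields infinitely many distinct rational curves lying on $\cal{W}$, which is the second assertion.

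For Zariski density I would argue by a dimension count in the spirit of Theorem~\ref{thm1}. For each rational direction $\overline{u}$ the surface $\cal{S}$, being a conic bundle that acquires a section after the base change $U=s^{2}$, is unirational; hence its rational points are dense, and pull back to a Zariski--dense set of rational points on the two--dimensional slice $R_{\overline{u}}\subset\cal{W}$ lying over the ray through $\overline{u}$. These slices form an $(n+1)$--dimensional family dominating the hypersurface $\cal{W}$ (itself of dimension $n+1$); since the rational directions are Zariski dense in $\bbb{P}^{n-1}$, the union of the sets $R_{\overline{u}}(\Q)$ is already Zariski dense in $\cal{W}$, whence so is $\cal{W}(\Q)$. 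I expect the main obstacle to be precisely this dominance step --- verifying that the slices $R_{\overline{u}}$ genuinely sweep out an open subset of $\cal{W}$ rather than collapsing into a proper subvariety --- together with the bookkeeping needed to excise the degenerate loci ($T=0$, $bs^{4}+a=0$, and $f_{1}(\overline{u})f_{2}(\overline{u})=0$) and the trivial lines before the dimension count applies.
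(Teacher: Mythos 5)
Your argument is correct, and it is in essence the paper's argument in different clothing, with one piece made explicit that the paper only quotes. The paper performs the one-step substitution $x_i=w_ix_1$, $y_1=pf_1(1,w)x_1^m$, $y_2=qf_1(1,w)x_1^m$, $x_1=f_1(1,w)^2r/f_2(1,w)$ (using the degree hypothesis exactly as you do, so that $f_2^2$ contributes $r^2$ rather than a radical), exhibits $\cal{W}$ as a cone over the fixed surface $\cal{W}':a(p^4-1)=b(q^4-r^2)$, and then simply asserts that $\cal{W}'$ is a degree-two del Pezzo surface with the rational point $(1,1,1)$, hence unirational, writing down an explicit two-parameter map $(g_1,g_2,g_3)$. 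Your surface $\cal{S}:2bU\eta_2^2=(bU^2+a)\eta_1^2+(bU^2-a)F_1^2$ is precisely $\cal{W}'$ in disguise: writing $a(p^2-1)(p^2+1)=b(q^2-r)(q^2+r)$, setting $U=(q^2+r)/(p^2+1)$ and eliminating $r$ reproduces $\cal{S}$ with $F_1$ scaled to $1$, so your conic-bundle argument with the base change $U=s^2$ is exactly a proof of the unirationality the paper takes for granted (the $u^2$ visible throughout the paper's $g_1,g_2$ betrays the same substitution, and $v$ is your line-pencil parameter $m$). What your route buys is self-containedness; what the paper's buys is brevity and a closed formula. Finally, the dominance step you flag as the main obstacle is not one: a general point of $\cal{W}$ determines the direction $[\overline{X}]$, then $U=(y_2^2+f_2(\overline{X}))/(y_1^2+f_1(\overline{X})^2)$, then $s=\pm\sqrt{U}$ and the pencil parameter $m$, so the map from the rational $(n+1)$-dimensional parameter space $(\overline{u},s,m)$ to the $(n+1)$-dimensional $\cal{W}$ is generically finite, hence dominant; Zariski density of $\cal{W}(\Q)$ (indeed unirationality of $\cal{W}$ over $\Q$) follows immediately, and the degenerate loci you list are proper closed subsets that cause no harm.
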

\begin{proof}
Put $\op{deg}f_{1}=m$ and $\op{deg}f_{2}=2m+1$. Without loss
of generality, $\op{gcd}(a,b)=1$. In order to construct rational points on the hypersurface
$\cal{W}$ we make the following change of variables
\begin{equation}\label{sub1}
x_{1}=\frac{f_{1}(1,w)^2}{f_{2}(1,w)}r,\quad x_{i}=w_{i}x_{1},\quad
y_{1}=pf_{1}(1,w)x_{1}^{m},\quad y_{2}=qf_{1}(1,w)x_{1}^{m},
\end{equation}
for $i=2,\ldots, n$, where $w=(w_{2},\ldots,w_{n})$. The inverse mapping is given by
\begin{equation*}
w_{i}=\frac{x_{i}}{x_{1}},\quad p=\frac{y_{1}}{f_{1}(\overline{X})},\quad
q=\frac{y_{2}}{f_{1}(\overline{X})}\quad
r=\frac{f_{2}(\overline{X})}{f_{1}(\overline{X})^2},
\end{equation*}
for $i=2,\ldots, n$.
The substitution given by (\ref{sub1}) leads to the surface
$\cal{W}'$ given by the equation (after clearing the common factor
$f_{1}(1,w)^4r^{4m}$)
\begin{equation*}\label{eq2}
\cal{W}':\;a(p^4-1)=b(q^4-r^2).
\end{equation*}
The transformation shows that $\cal{W}$ is just a cone over the surface $\cal{W}'$.
It is well known that $\cal{W}'$ is a del Pezzo surface of degree two with known
rational point $(1,1,1)$ and is unirational over $\Q$. In particular, this implies
the existence of a rational map of the following form
\begin{equation*}
\phi:\Q^2\ni (u,v) \mapsto (p,q,r)=(g_{1}(u,v),g_{2}(u,v),g_{3}(u,v))\in \cal{W}'.
\end{equation*}
The coordinates of $\phi$ are given by
\begin{equation*}
g_{1}(u,v)=\frac{bu^2(u^2-4uv+2v^2)+a}{bu^2(u^2-2v^2)+a},\; g_{2}(u,v)=\frac{bu^2(u^3-2u^2v+2uv^2)+a(u-2v)}{bu^2(u^2-2v^2)+a}
\end{equation*}
and $g_{3}(u,v)=u^2(g_{1}(u,v)^2+1)-g_{2}(u,v)^2$. In particular
$\overline{\phi(\Q)}=\cal{W}'(\R)$ in the Zariski topology and this property
immediately implies the density (in the Zariski topology) of rational points on
the hypersurface $\cal{W}$. The existence of a map $\phi$ implies also the existence
of a rational two-parametric solution of the equation defining $\cal{W}$.
\end{proof}

\section{Some additional remarks on $ax^2+by^{2p}=cz^{2q}+dw^{2r}$ with
$\frac{1}{p}+\frac{1}{q}+\frac{1}{r}=1$ and $abcd$ square}\label{sec7}

The equations of Sections 2 to 4 are special cases of the following;
\begin{equation*}
\cal{S}:\;ax^2+by^{2p}=cz^{2q}+dw^{2r},\quad \frac{1}{p}+\frac{1}{q}+\frac{1}{r}=1,
\end{equation*}
where $a, b, c, d\in\N$, $abcd=\square$. The triples of positive integers
$(p, q, r)$ satisfying the condition $\frac{1}{p}+\frac{1}{q}+\frac{1}{r}=1$, $p \leq q \leq r$, are precisely $(2,3,6), (2,4,4), (3,3,3)$.
In this section we make some remarks concerning the existence of rational points
on $\mathcal{S}$.  The main observation is the following.

\begin{thm}\label{generalabcd}
The surface $\cal{S}$ is birationally equivalent with a genus one curve defined
over the field of rational functions $\Q(t)$. In the case $(p,q,r)=(2,3,6)$ the
curve can be given in Weierstrass form.
\end{thm}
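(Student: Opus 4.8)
The plan is to linearise the defining equation by introducing the quantities $X=x$, $Y=y^p$, $Z=z^q$, $W=w^r$, which carry $\cal{S}$ onto the quadric surface
\[ \cal{Q}:\; aX^2+bY^2=cZ^2+dW^2 \subset \mathbb{P}^3 \]
via the finite map $(x,y,z,w)\mapsto (x:y^p:z^q:w^r)$. The Gram determinant of the quaternary form $aX^2+bY^2-cZ^2-dW^2$ equals $abcd$, which is a square by hypothesis; hence $\cal{Q}$ is split over $\Q$ and its two rulings are defined over $\Q$ (rather than being conjugate over a quadratic extension). Fixing one ruling gives a pencil of lines $\{L_t\}$ parametrised by $t$, and hence a fibration of $\cal{Q}$, which I would pull back to a fibration of $\cal{S}$.

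Next I would identify the generic fibre. A generic line $L_t$ in the chosen ruling can be written, after solving its two defining linear equations, in the form $X=\ell_1(Z,W)$, $Y=\ell_2(Z,W)$ with $\ell_1,\ell_2$ linear. Pulling back under $X=x$, $Y=y^p$, $Z=z^q$, $W=w^r$, the first relation merely expresses $x$ as a function of $z,w$ and contributes nothing, while the second becomes the single weighted plane curve
\[ C_t:\; \alpha\,y^p+\beta\,z^q+\gamma\,w^r=0 \]
lying automatically on $\cal{S}$ (since $L_t\subset\cal{Q}$). The point of the hypothesis $\tfrac{1}{p}+\tfrac{1}{q}+\tfrac{1}{r}=1$ is precisely that $C_t$ has genus one: for $(3,3,3)$ it is a smooth plane cubic; for $(2,4,4)$ it takes the quartic shape $y^2=\beta' z^4+\gamma' w^4$; and for $(2,3,6)$ it becomes $y^2=\beta' z^3+\gamma' w^6$. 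Since a surface carrying a fibration to a base curve is birational to its generic fibre viewed over the function field of the base, this exhibits $\cal{S}$ birationally as a genus-one curve over $\Q(t)$. In the case $(2,3,6)$, dehomogenising the weight-one variable $w$ turns $C_t$ into $y^2=(\text{cubic in }z)$, whose unique point at infinity is rational, so this is already a Weierstrass model; by contrast the binary quartic $(2,4,4)$ and the ternary cubic $(3,3,3)$ are genuinely of genus one but carry no distinguished rational point, which explains why the sharper Weierstrass conclusion is asserted only for $(2,3,6)$.

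The step I expect to be the main obstacle is the field of definition of the base of the fibration. Square discriminant guarantees only that each ruling is Galois-stable, hence defined over $\Q$; the base of the ruling is a priori a conic $B$, and the generic fibre is defined over $\Q(B)$, which coincides with $\Q(t)$ exactly when $B$ carries a rational point, equivalently when $\cal{Q}$ contains a rational line. Writing each binary form as a norm form shows that both sides are norms from the same field $K=\Q(\sqrt{-ab})=\Q(\sqrt{-cd})$ (the identification using $ab/cd=\square$), and a rational line then exists iff $a/c$ is a norm from $K$; this condition can fail (for instance $X^2+Y^2=3(Z^2+W^2)$ has no rational line, being $3$-adically anisotropic). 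Thus the genuinely delicate point is to produce a rational line on $\cal{Q}$, that is, to verify this norm condition, after which the explicit parametrisation of the pencil and the resulting genus-one equation over $\Q(t)$ are routine; the genus computation for $C_t$ itself is immediate once $\tfrac{1}{p}+\tfrac{1}{q}+\tfrac{1}{r}=1$ is invoked.
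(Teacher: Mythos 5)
Your argument is the paper's proof in geometric language: the factorization $aX^2+bY^2-cZ^2-dW^2=L_1L_2-L_3L_4$ together with the pencil $L_1=tL_3$, $tL_2=L_4$ used in the paper is exactly your choice of a ruling of $\cal{Q}$ and of a line $L_t$ in it, and eliminating $x$ produces the same genus-one fibre $A(t)y^p+B(t)z^q+C(t)w^r=0$, with the Weierstrass model available only for $(2,3,6)$ for the reason you give. The obstacle you single out --- the existence of a rational line on $\cal{Q}$ --- is precisely what the paper obtains from Richmond's theorem, whose hypotheses include that the quadric possess a non-zero rational point; that hypothesis is tacit in the statement of Theorem \ref{generalabcd}, and your example $X^2+Y^2=3(Z^2+W^2)$ correctly shows it does not follow from $abcd=\square$ alone, so your reservation is a fair comment on the statement rather than a gap relative to the paper's own proof.
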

\begin{proof}
Recall (see for example Richmond~\cite{Ric}) that a quadratic form
$aX^2+bY^2-cZ^2-dW^2$, with $abcd=\square$, and possessing a non-zero rational
point, can be written in the form $L_{1}L_{2}-L_{3}L_{4}$ where $L_{i}$ is a linear form
in the variables $X,Y,Z,W$ with rational coefficients, for $i=1,2,3,4$.
This immediately implies that the equation defining the surface $\cal{S}$ can be
written in the form
\begin{equation*}
\cal{S}:\;L_{1}(x,y^p,z^q,w^r)L_{2}(x,y^p,z^q,w^r)=L_{3}(x,y^p,z^q,w^r)L_{4}(x,y^p,z^q,w^r).
\end{equation*}
Instead, we view $\mathcal{S}$ as a curve $\mathcal{C}$ defined over $\Q(t)$ by
the following intersection:
\begin{equation*}
\cal{C}:\;L_{1}(x,y^p,z^q,w^r)=t \;L_{3}(x,y^p,z^q,w^r), \quad t\; L_{2}(x,y^p,z^q,w^r)=L_{4}(x,y^p,z^q,w^r).
\end{equation*}
Eliminating $x$ (these are linear equations) there results an equation of the form:
\begin{equation*}
\cal{C}:\;A(t)y^p+B(t)z^q+C(t)w^r=0,
\end{equation*}
where $A,B,C\in\Z[t]$ depend on $a,b,c,d$. Because $(p,q,r)\in\{(2,3,6), (2,4,4), (3,3,3)\}$,
it follows immediately that $\cal{C}$ is of genus 1. If $(p,q,r)=(2,3,6)$ then $\cal{C}$ is
birationally equivalent to the elliptic curve in Weierstrass form $Y^2=X^3+A(t)^3B(t)^2C(t)$,
where $Y=A(t)^2B(t)\frac{y}{w^3}$ and $X=A(t)B(t)\frac{z}{w^2}$.
\end{proof}

\begin{exam}
{\rm
Consider the equation $x^2+y^6=2(z^6+w^6)$. It is readily checked that
\begin{align*}
%(3x-y-2z-4w)(7x-y-10z)- & (3x+y-4z-2w)(5x-5y-8z-6w) \\
%                        & = 6(x^2+y^2-2(z^2+w^2).
(3X-Y-2Z-4W)(7X-Y-10Z)- & (3X+Y-4Z-2W)(5X-5Y-8Z-6W) \\
                        & = 6(X^2+Y^2-2Z^2-2W^2).
\end{align*}
Thus $x^2+y^6=2(z^6+w^6)$ if and only if there exists a rational number $t$ such that
\begin{equation*}
(3x+y^3-4z^3-2w^3)=t(3x-y^3-2z^3-4w^3),\;t(5x-5y^3-8z^3-6w^3)=7x-y^3-10z^3.
\end{equation*}
It follows that
\begin{equation*}
x=\frac{(t+1)y^3+2(t-2)z^3+2(2t-1)w^3}{3(t-1)},
\end{equation*}
and
\begin{equation*}
C_{t}:\;(5t^2-8t+5)y^3+(7t^2-10t+1)z^3+(-t^2+10t-7)w^3=0.
\end{equation*}
A small numerical search reveals that when $t=1/13$, the cubic curve has rational
point $P(y,z,w)=(5,18,7)$ corresponding to $x=8261$. It is easily checked that $P$ is
of infinite order on $C_{1/13}$, and thus the set of rational points on the surface
$x^2+y^6=2(z^6+w^6)$ is infinite.
}
\end{exam}

\begin{rem}
{\rm We undertook a small numerical search for rational points on the surface
$ax^2+by^6=cz^6+dw^6$ with $1\leq a,b,c,d\leq 5$ and $c\leq d$. For all but one
case in this range we found that the equation is insolvable modulo 3, or has
a solution with small height ($\leq 100$). The only case resisting this attack
is the equation $2x^2+y^6=2z^6+4w^6$. This equation can be written in the
alternative form $2(x^2-z^6)=4w^6-y^6$ and thus contains the pencil of cubic curves
\begin{equation*}
C_{t}:\;2(t^2-2)w^3-(t^2+2)y^3+4tz^3=0.
\end{equation*}
We used the Magma procedure {\tt PointsCubicModel} for $t$ in the range $H(t)\leq 100$
in order to find curves $C_{t}$ containing rational points with $\op{max}\{|x|,|y|,|z|\}\leq 10^6$.
However, no solution was found in this range.
}
\end{rem}

\noindent
{\bf Acknowledgments}. The authors thank the referee for a careful reading of the paper,
and for suggesting numerous improvements.
The first author acknowledges with gratitude the hospitality of
the Jagiellonian University, Krak\'ow, for a short visit when the results presented in this paper
were finalized; research of the second author was supported by Polish Government funds
for science, grant IP 2011 057671 for the years 2012--2013.

\bigskip

\noindent Andrew Bremner, School of Mathematical and Statistical Sciences,
Arizona State University, Tempe AZ 85287-1804, USA; e-mail:
bremner@asu.edu

\bigskip

\noindent Maciej Ulas, Jagiellonian University, Faculty of Mathematics and Computer Science, Institute of
Mathematics, {\L}ojasiewicza 6, 30-348 Krak\'ow, Poland; email:
Maciej.Ulas@im.uj.edu.pl

\end{document}